\DeclareMathAlphabet{\pazocal}{OMS}{zplm}{m}{n}
\tikzset{>=stealth}
  \newcommand{\calD}{\mathcal{D}}
  \newcommand{\calT}{\mathcal{T}}
  \newcommand{\FF}{\mathbb{F}}
  \newcommand{\NN}{\mathbb{N}}
  \newcommand{\RR}{\mathbb{R}}
  \newcommand{\ZZ}{\mathbb{Z}}
  \newtheorem{theorem}{Theorem}[section]
  \newtheorem{proposition}[theorem]{Proposition}
  \newtheorem{corollary}[theorem]{Corollary}
  \newtheorem{lemma}[theorem]{Lemma}
  \newtheorem{introthm}{Theorem}
  \theoremstyle{definition}
  \newtheorem{definition}[theorem]{Definition}
  \newtheorem{claim}[theorem]{Claim}
  \newtheorem*{claim*}{Claim}
  \newtheorem*{question*}{Question}
  \newtheorem*{answer*}{Answer}
  \newtheorem*{application*}{Application}
  \theoremstyle{remark}
  \newtheorem{remark}[theorem]{Remark}
  \newtheorem*{remark*}{Remark}
  \newcommand{\Aut}{\ensuremath{\operatorname{Aut}}\xspace} 
   \newcommand{\GL}{\ensuremath{\operatorname{GL}}\xspace} 
   \newcommand{\oG}{{\overline \Gamma}}
  \newcommand{\supp}{{\rm supp}}
\DeclareMathOperator{\out}{Out}
\newcommand{\Out}{\out(\F)}
  \newcommand{\param}{{\mathchoice{\mkern1mu\mbox{\raise2.2pt\hbox{$
  \centerdot$}}
  \mkern1mu}{\mkern1mu\mbox{\raise2.2pt\hbox{$\centerdot$}}\mkern1mu}{
  \mkern1.5mu\centerdot\mkern1.5mu}{\mkern1.5mu\centerdot\mkern1.5mu}}}
  \renewcommand{\setminus}{{\smallsetminus}}
  \newcommand{\F}{{\FF_n}} 
    \newcommand{\A}{{A(\Gamma)}}
\begin{document}


\title[Dilatation of outer automorphisms of Right-angled Artin Groups]{Dilatation of outer automorphisms of Right-angled Artin Groups }
  
%
\author   {Corey Bregman}
\address{Department of Mathematics, Brandeis University}
\email{Cbregman@brandeis.edu}
\author   {Yulan Qing}
\address{Department of Mathematics, University of Toronto, Toronto, ON }
\email{yulan.qing@gmail.com}

 
  \date{\today}

\begin{abstract} 

Given a right-angled Artin group defined by a simplicial graph: $A(\Gamma) = \langle V | E \rangle$ and an automorphism $\phi \in \Aut(A(\Gamma))$ there is a natural measure of how fast the length of a word $w$ of $A(\Gamma)$ grows after $n$ iterations of $\phi$ as a function of $n$, which we call the dilatation of $w$ under $\phi$. We define the dilatation of $\phi$ as the supremum over dilatations of all $w \in \A$. Assuming that $\phi$ is a pure and square map, we show that if the dilatation of $\phi$ is positive, then either there exists a free abelian special subgroup on which the dilatation is realized; or there exists a strata of either free or free abelian groups on which the dilatation is realized. 

\end{abstract}
  
 \maketitle
  

\section{Introduction}
Let $\Gamma=(V,E)$ be a finite simplicial graph.  The right-angled Artin group (RAAG) associated to $\Gamma$ is the group with presentation
\[A(\Gamma)=\langle v\in V|[v, w], \mbox{ }(v,w)\in E\rangle\]
That is, the generators are in bijection with $V$, and the only relations are that two generators commute if the corresponding vertices form an edge in $\Gamma$. From their simple presentations, we see that RAAGs encompass a spectrum of groups with free groups $F_n$ at one end and free abelian groups $\ZZ^n$ at the other. RAAGs are a well-studied class of groups with close connections to low-dimensional topology; they arise naturally in the study of diffeomorphism groups of manifolds \cite{KK17} and most recently, they played a key role in Agol's solution \cite{Ag13} to the virtually Haken conjecture for hyperbolic 3-manifolds.

In this article, we study the automorphism groups of RAAGs.  At one end of the spectrum, the study of $\GL_n(\ZZ)$ is classical, because of its relation to classification of lattices in $\RR^n$ and in turn, flat metrics on tori.  At the other, $\Out$, or the outer automorphism group of the free group, became an active area of research beginning in the early 20th century with the work of Nielsen and Magnus \cite{LS77}.  The introduction of Culler--Vogtmann Outer space \cite{CV86} provided a geometric action for $\Out$, putting its study on equal footing with that of lattices in semisimple Lie groups and perhaps more closely with the mapping class group of an orientable surface of genus $g$.  

Let $\out(A(\Gamma))$ denote the outer automorphism group of $A(\Gamma)$.  $\out(A(\Gamma))$ has been shown to possess many of the same properties as $\Out$ and $\GL_n(\ZZ)$. For example, it is known to be finitely generated \cite{Ser89},\cite{Lau95}, finitely presented \cite{Day09}, and to have finite virtual cohomological dimension \cite{ChVo09}, \cite{BCV09}, \cite{DW18}.  

In what follows, we consider the algebraic and dynamical structure of automorphisms of RAAGs, focusing specifically on the asymptotic growth of conjugacy classes of elements under the action of an automorphism.  Growth of automorphisms of $\GL_n(\ZZ)$ can be easily obtained from the Jordan normal form of a matrix.  For $\Out$, on the other hand, a characterization of the growth is obtained by demonstrating the existence of a train track map or a relative train track map, first constructed in \cite{BH92}, \cite{BFH00}, \cite{BFH05}.  In either the free or free abelian case, words grow either polynomially or exponentially under the iterated application of an automorphism. 

By work of Day \cite{Day09}, any automorphism of $A(\Gamma)$ decomposes uniquely into an $\FF_n$-part and a $\ZZ^n$-part, and one may hope that the dynamics of any automorphism admits a similar structural decomposition. However, the construction of train tracks in the free group case relies heavily on Culler--Vogtmann outer space.  For certain classes of RAAGs, an outer space has been constructed \cite{CCV07}, and Charney--Stambaugh--Vogtmann \cite{CSV17} have constructed an outer space for the ``$\FF_n$"-part of $\out(A(\Gamma))$, but no outer space yet exists in general. To overcome this difficulty, we restrict our attention to the collection
$\Phi\in \out(A(\Gamma))$ which map 2-cells to 2-cells without folding the boundary edges (see Definition~\ref{Def:Square}), and call such $\phi$ square maps.  

Recall that an element $\Phi\in \out(\A)$ of is an equivalence class of automorphisms $\Phi=[\phi]$, defined up to conjugation by elements of $\A$.  Given an automorphism $\phi\in \Aut(A(\Gamma))$ and a word $w\in A(\Gamma)$, we define the dilatation $\lambda_\phi(w)$ to be the logarithm of the length of $\phi^n(w)$ divided by $n$.  The dilatation captures the average exponential growth of the length of $w$ under iterated application of $\phi$. The \textit{dilatation} $\lambda_\phi$ is the supremum of $\lambda_\phi(w)$ over all words $w\in A(\Gamma)$.  

\begin{introthm}\label{PositiveDilatation}
Let  $\Phi=[\phi] \in \out(\A)$ be a pure square map. If the dilatation $\lambda_{\phi}$ is positive, then there exists a $\phi$--invariant induced subgraph $\Delta \subset \Gamma$ where 
$\lambda_{\phi|_{\Delta}} = \lambda_{\phi}$ and $\Delta$ satisfies one of the following:
\begin{enumerate}[(i)]
\item $\Delta$ is a complete graph.
\item $\Delta$ contains an empty graph $\Delta_{e}$ that is the union of all cycles in $\calD_{\Delta}$.
\end{enumerate}
\end{introthm}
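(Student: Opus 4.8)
The plan is to realize $\lambda_\phi$ on a recurrent piece of a transition digraph attached to $\phi$ and then to classify that piece. To $\phi$ I attach a directed graph $\calD_\Gamma$ on the vertex set $V$, recording for each generator $v$ which generators occur in a folding-free (square) representative of $\phi(v)$. The square hypothesis of Definition~\ref{Def:Square} is precisely what makes such a representative available and the incidence well defined, while purity guarantees that $\phi$ permutes the vertices compatibly, so that under iteration positive and inverse occurrences of a generator do not interact. Replacing $\phi$ by a power if necessary, I may assume each strongly connected component of $\calD_\Gamma$ is $\phi$--invariant; each such component spans an induced subgraph of $\Gamma$, and closing a component under the domination order (adjoining the generators it dominates, i.e.\ those reachable from it in $\calD_\Gamma$) produces a $\phi$--invariant induced subgraph.

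Next I would show that $\lambda_\phi$ is computed stratum by stratum: the word-length growth of an arbitrary element of $\A$ is governed by the nonnegative transition matrix of $\calD_\Gamma$, whose spectral radius is the maximum of the Perron--Frobenius eigenvalues of its strongly connected components, and $\lambda_\phi$ equals the logarithm of that maximum. The heart of the argument, and the step I expect to be the main obstacle, is ruling out cancellation: one must prove that along a recurrent path in $\calD_\Gamma$ the square representative of $\phi^n$ exhibits no unexpected free or commutation reduction, so that the combinatorial count coming from the transition matrix agrees with actual word length up to a bounded multiplicative error. This is the analogue of the no-cancellation property of (relative) train track maps; I expect to establish it via a bounded-cancellation lemma for square maps together with an analysis of how commuting subwords can be absorbed, using purity to keep track of signs.

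Having located a strongly connected component $C$ with Perron eigenvalue $e^{\lambda_\phi} > 1$, I take $\Delta$ to be the induced subgraph on the $\phi$--orbit of $C$ together with the generators it dominates, so that $\lambda_{\phi|_{\Delta}} = \lambda_\phi$, and then split into two cases according to the internal geometry of $C$. If the generators of $C$ pairwise commute in $\A$, then $A(C) \cong \ZZ^k$, the restriction $\phi|_C$ lies in $\GL_k(\ZZ)$, the growth is linear-algebraic, and enlarging $\Delta$ to the complete subgraph it spans — which does not raise the dilatation, since the adjoined generators are dominated — lands us in case (i). Otherwise $C$ contains a non-edge, and I would argue that the union of all cycles of $\calD_\Delta$ lies in the sub-digraph spanned by pairwise non-adjacent generators, namely an empty induced subgraph $\Delta_e$: a commuting pair lying on a growth-carrying cycle would allow one to factor off an abelian direction and strictly shrink the support of the exponentially growing part, contradicting the minimality built into the choice of $C$. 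This gives case (ii).

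Finally I would assemble the bookkeeping relating ``strongly connected component of $\calD$'', ``$\phi$--invariant induced subgraph'', and ``subgroup realizing the dilatation'', verify that $\Delta_e$ is exactly the union of all cycles of $\calD_\Delta$, and check that the restriction of $\phi$ to $A(\Delta)$ is again a pure square map, so that the two alternatives are exhaustive and the statement is internally consistent.
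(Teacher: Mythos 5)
Your proposal correctly hits on the central device of the paper (the directed ``transition'' graph on generators, which the paper calls the automorphism diagram), but it diverges from the paper's proof in a way that leaves a real gap, and it also misses the lemma that actually makes the dichotomy work.

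First, the gap. You propose to compute $\lambda_\phi$ as the logarithm of the Perron--Frobenius eigenvalue of the transition matrix, which forces you to rule out cancellation along iterated images so that the combinatorial count matches true word length. You yourself flag this bounded-cancellation step as the main obstacle, and rightly so: no such lemma is proved in the paper, and it is far from clear how to establish one in the RAAG setting, where commutation relations allow nonlocal shuffling. The paper avoids this problem entirely by arguing in the \emph{opposite} direction: it never claims that a cycle forces exponential growth or that $\lambda_\phi$ equals a spectral radius. Instead, it proves (Theorem~\ref{Lem:cycle}) that an \emph{acyclic} diagram implies polynomial growth, via a stratification by ``terminal height'' and an induction showing that each generator maps to a word of the form $t_0 s^{\pm 1} t_1$ with $t_i$ supported on lower strata. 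The contrapositive then yields that positive dilatation forces the down-set of the dilatation-realizing generator to contain a cycle. No lower bound on growth is needed, only an upper bound, and cancellation only helps there.

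Second, the dichotomy. Your case analysis (``a commuting pair on a growth-carrying cycle would let one factor off an abelian direction and shrink the support'') is vague and is not how the split is established. The engine is Lemma~\ref{Lem:path}: because $\phi$ is a pure square map, if $(s_0,\dots,s_n)$ is a directed path in $\calD_\phi$ and $s_0$ commutes with $s_1$, then the $s_i$ span a complete subgraph of $\Gamma$. Applied around a cycle (Corollaries~\ref{Cor:zgroupadjacent} and \ref{Cor:zgroup}, Proposition~\ref{Prop:abelianorfree}), this gives the clean statement that every cycle in $\calD_\phi$ spans \emph{either} a complete \emph{or} an empty induced subgraph of $\Gamma$ --- there is no mixed case to rule out. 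The invariant subgraph $\Delta$ in the paper is then obtained not from a single strongly connected component but by iteratively trimming source vertices from the down-set $d(s_0)$; the trimmed result is $\phi$-invariant, carries the full dilatation (since any word visits the removed sources boundedly often), and contains all cycles. If some pair in the union $\Delta_e$ of cycles commutes, a further lemma (Lemma~\ref{Lem:comPersist}) shows every pair of vertices in $\Delta$ reachable from commuting ones must also commute, whence $\Delta$ is complete; otherwise $\Delta_e$ is an empty graph. Your alternative enlargement ``to the complete subgraph it spans'' is also problematic since that enlargement need not be $\phi$-invariant. In short: same diagram, but a different and unnecessary spectral strategy with a genuine unproved step, and a missed key lemma for the complete/empty dichotomy.
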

To prove this theorem we define a finite directed graph $\calD_{\Delta}$ which we call the automorphism diagram (cf. \S3). The automorphism diagram which keeps track of the verbal substitution of each generator of $A(\Gamma)$ under iterated application of $\phi$.
 
Theorem \ref{PositiveDilatation} can be interpreted as saying that the growth of square automorphisms is not more complicated than that of its free and free abelian subgroups. If the dilatation of a word in $A(\Gamma)$ is zero, this means that its length grows subexponentially. It follows from relative train track theory for free groups \cite{BH92},\cite{BFH00} and from the Jordan normal form for free abelian groups that if an automorphism has vanishing dilatation, then it grows polynomially.  We conjecture that the same is true for general RAAG automorphisms, and in Theorem \ref{Lem:cycle}, we prove that if a pure, square map $\phi$ has an automorphism diagram without cycles, then $\lambda_\phi=0$ and $\phi$ grows polynomially.


The paper is organized as follows.  In \S2, we review relevant background information on centralizers in right-angled Artin groups and automorphisms, and define the notions of dilatation of an automorphism and of a square map for right-angled Artin groups. In \S3, we introduce the main technical tool of the paper, the automorphism diagram, and analyze its key properties for square maps. We show that an automorphism with acyclic automorphism diagram has polynomial growth in  \S4. Finally in \S5,   we apply the results of \S4 to find the system of free and free abelian subgraphs in Theorem \ref{PositiveDilatation}.

\section{Background}
\subsection{Finite simple graphs}
Let $\Gamma=(V,E)$ be a \emph{finite simple graph}. That is, a graph in which each edge is uniquely determined by a pair of distinct vertices.  By an \emph{induced subgraph} we mean a subgraph $\Gamma'= (V',E')$ such that $V' \subseteq V$ and given $v_1, v_2 \in V'$
\[
(v_1, v_2) \in E' \Longleftrightarrow (v_1, v_2) \in E.
\]
Given a vertex $v_0 \in V$, the \emph{link} of $v_0$ is the induced subgraph on the set of vertices 
\[
\{v | (v, v_0) \in E\}.
\]
ln contrast, the \emph{star} of a vertex $v_0 \in V$ is the induced subgraph on 
\[
\{v | (v, v_0) \in E\} \cup \{v_0\}.
\]
The \emph{complement} of $\Gamma = (V, E)$, denoted $\overline{\Gamma}$, is the graph $(V, E')$ where
\[
(v_1, v_2) \in E' \Longleftrightarrow (v_1, v_2) \notin E.
\]
\subsubsection{Directed graphs}
By a \emph{directed graph} we mean a finite graph where each edge is an ordered pair $(v_i, v_j)$ and $(v_i, v_j) \neq (v_j, v_i)$. We also write $v_i \to v_j$ for a directed edge from $v_i$ to $v_j$. We can depict a directed graph by putting an arrow on each edge.
A path $\gamma$ is a sequence of vertices  
\[
v_0 \to v_1 \to v_2... \to v_n
\]
A \emph{cycle} is a path as above such that $v_0=v_n$. We  will refer to  a path by the tuple of vertices \emph{i.e.} $\gamma=(v_0,\ldots,v_n)$. 
\begin{remark}
In contrast to standard definition of cycle in graph theory, we allow repeated vertices. 
\end{remark}

\subsection{Right-angled Artin groups}
Let $A(\Gamma) = \langle V| E \rangle$ denote the \emph{right-angled Artin group} associated to $\Gamma$.  The generators of $A(\Gamma)$ are in one-to-one correspondence with the vertices $V$, and two generators commute if and only if their corresponding vertices are connected by an edge in $\Gamma$.  A \emph{special subgroup} of $A(\Gamma)$ is any subgroup of the form $A(\Delta)$, where $\Delta \subseteq \Gamma$ is an induced subgraph.  

\begin{definition}
We fix the generating set once and for all. Given word $w \in \A$,  we say $w$ is \emph{reduced} if it cannot be shortened by successively applying commutation relations and canceling $vv^{-1}$ pairs, $v\in V$.  We say $w$ is \emph{cyclically reduced} if $w$ and all of its cyclic conjugates are reduced. If $w$ is a reduced, then the \emph{length} of $w$, denoted $|w|$, is the number of letters in the spelling of $w$, and the \emph{support} of $w$, denoted $supp(w)$, is the set of elements $v$ in  $V$ such that $v$ or $v^{-1}$ occurs in the spelling of $w$.   

\end{definition}

\subsection{Commuting elements in $\A$}
$\A$ is a special case of a graph product, where all the vertex groups are infinite cyclic. The general construction proceeds as follows. Let $\Gamma=(V,E)$ be a graph. To each $v \in V$, associate a vertex group $G_v$. Then, the \emph{graph product} of the $\{G_v\}$ with respect to the graph $(V, E)$ is defined as $F/R$ where $F$ is the free product of the  $G_v$ and $R$ is the normal subgroup generated by subgroups of the form $[G_u,G_v]$ whenever there is an edge joining $u$ and $v$.

In \cite{Ser89},  Servatius studied graph products of groups and characterized when the support of two words commute pairwise. Given two words $w_1,w_2$, we will say that $supp(w_1)$ \textit{commutes with} $supp(w_2)$ if the elements of $supp(w_1)$ and $supp(w_2)$ pairwise commute.We recall the relevant definitions and results here:

\begin{definition}\label{factorial}
If $u ,v \in A(\Gamma)$ are reduced, then 
\[ |uv| \leq |u| + |v|. 
\]
and if equality holds we say that the product $uv$ is a \emph{reduced factorization}. If $uv$ is not reduced, then there exists a unique $h$ such that we have that 
\[
u = u'h  \text{ with } |u| = |u'| + |h| \\
v = h^{-1}v'  \text{ with } |v| = |v'| + |h|
 \]
 and 
 
 \[
 u v = u'v'  \text{ with } |uv| = |u'| + |v'|.
 \]
 \end{definition}

\begin{proposition}\label{commutingwords}
Suppose that $u$ and $v$ are commuting elements in $A(\Gamma)$ such that
\begin{enumerate}
\item $uv$ is a reduced factorization, and
\item the induced subgraph of $\Gamma $ on $supp(uv)$ has connected complement.
\end{enumerate}

Then there exists an element $h$ in $A(\Gamma)$ such that both $u$ and $v$ belong to the cyclic subgroup of $A(\Gamma)$ generated by $h$.
\end{proposition}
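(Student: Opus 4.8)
\emph{Plan.} I would compare the two reduced spellings $u\cdot v$ and $v\cdot u$ of the element $uv=vu$ to pin down the support geometry, thereby reducing to the case $\supp(u)=\supp(v)$, and then finish by applying Servatius's Centralizer Theorem \cite{Ser89}. Reducing to cyclically reduced words will be the delicate point.

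\emph{Step 1: reducing to $\supp(u)=\supp(v)$.} Since $u$ and $v$ commute, by hypothesis (1) the element $uv$ has the two reduced factorizations $u\cdot v$ and $v\cdot u$ (of the same length), so by the normal form theory for right-angled Artin groups they differ by a sequence of transpositions of adjacent commuting generators. In particular, for any pair of non-adjacent vertices $s\not\sim t$ the relative order of the occurrences of $s^{\pm1}$ and $t^{\pm1}$ is an invariant of $uv$. Put $I=\supp(u)\cap\supp(v)$, $U=\supp(u)\setminus I$, $W=\supp(v)\setminus I$ (so $\supp(uv)=I\cup U\cup W$ by (1)). I claim no non-edge of the induced subgraph on $\supp(uv)$ joins two of the sets $I,U,W$. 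For instance, if $s\not\sim t$ with $s\in\supp(u)$ and $t\in W$, then in $u\cdot v$ every $t$-occurrence comes after every occurrence coming from $u$, so the $\{s,t\}$-interleaving of $uv$ is $\alpha\,t^{k}$ with $\alpha$ the $\{s,t\}$-pattern of $u$ and $k\ge1$; in $v\cdot u$ it is $t^{k}\alpha$; and $\alpha t^{k}=t^{k}\alpha$ as strings forces $\alpha$ to contain no $s$, contradicting $s\in\supp(u)$. The remaining cross-cases are identical after swapping $u\leftrightarrow v$. Hence the complement of the induced subgraph on $\supp(uv)$ is the disjoint union of the complements of the induced subgraphs on $I$, $U$, $W$; by hypothesis (2) this is connected, so at most one of $I,U,W$ is nonempty. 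If $U\ne\varnothing$ this forces $\supp(v)=\varnothing$, i.e. $v=1$ and we take $h=u$; symmetrically if $W\ne\varnothing$ then $u=1$. Otherwise $\supp(u)=\supp(v)=:\Delta$ and the complement of the induced subgraph on $\Delta$ is connected.

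\emph{Step 2: the case $\supp(u)=\supp(v)=\Delta$.} Assume for the moment that $uv$ is cyclically reduced. From $vu=uv$ one gets that $uv$ commutes with $u$ (indeed $u\cdot uv=u^{2}v=uvu=uv\cdot u$) and with $v$, so $u,v\in C_{A(\Gamma)}(uv)$. Since $\supp(uv)=\Delta$ has connected complement, $uv$ is not a nontrivial join element, so Servatius's Centralizer Theorem gives $C_{A(\Gamma)}(uv)=\langle h\rangle\times A(\Gamma_{\perp})$, where $uv=h^{m}$ for some $m\ge1$ — so $\supp(h)=\Delta$ — and $\Gamma_{\perp}$ is the induced subgraph on the vertices adjacent to every vertex of $\Delta$, a set disjoint from $\Delta$. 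Writing $u=h^{i}p$ and $v=h^{j}q$ with $p,q\in A(\Gamma_{\perp})$: as $h^{i}$ and $p$ have disjoint supports, $\supp(u)=\supp(h^{i})\cup\supp(p)$, while $\supp(p)\subseteq\Gamma_{\perp}$ is disjoint from $\Delta=\supp(u)$, so $p=1$, and likewise $q=1$. Thus $u=h^{i}$ and $v=h^{j}$ both lie in $\langle h\rangle$.

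\emph{The main obstacle.} What remains — and what I expect to be the crux — is to justify that, under hypotheses (1) and (2), $uv$ is automatically cyclically reduced. The plan is induction on $|uv|$: if $uv=a\,w'\,a^{-1}$ is a nontrivial cyclic reduction, then the conjugating letters cannot be split across the factorization — were $u=au_{1}$ and $v=v_{1}a^{-1}$ reduced, then $vu=v_{1}u_{1}$ would be strictly shorter than $uv=u\cdot v$, violating (1) — so $u$ (say) has the form $a\,u'\,a^{-1}$ with $u'$ shorter, $v$ neither begins with $a$ nor ends with $a^{-1}$, and one passes to the commuting pair $a^{-1}ua,\,a^{-1}va$ with product $w'$. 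One then has to check that this shorter pair still has equal supports and forms a reduced factorization (splitting off any length-one cancellation via Definition~\ref{factorial} and verifying commutativity survives it), and that the connected-complement hypothesis is inherited; once this is done the induction closes and Step 2 applies. Alternatively one can try to finish Step 1's combinatorics directly when $\supp(u)=\supp(v)$: for a non-edge $s\not\sim t$ of the induced subgraph on $\Delta$ the $\{s,t\}$-patterns of $u$ and $v$ commute as words, hence are powers of a common pattern, and the sign-sequences of each vertex are proportional — this rigidity should force $u$ and $v$ to be powers of a common element without invoking cyclic reduction at all.
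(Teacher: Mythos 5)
The paper does not actually prove Proposition~\ref{commutingwords}; it is recalled from Servatius \cite{Ser89} without argument, so your proposal has to be judged on its own merits. Your strategy, which reduces to $\supp(u)=\supp(v)$ via the $\{s,t\}$-projection invariant and then invokes Servatius's Centralizer Theorem, is sound, and in fact becomes a complete proof once the cyclic-reduction point is handled; that point is much easier than you anticipate and does not require the induction you sketch.

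Two small repairs to Step~1. The invariant you want is the subword consisting of all $s^{\pm1}$- and $t^{\pm1}$-letters (with signs) in a reduced spelling; it is an invariant of the element because, for $s\not\sim t$, a shuffle can never transpose an $s^{\pm1}$ past a $t^{\pm1}$, nor past another letter of the same vertex. Also, in the subcase $s\in I$, $t\in W$, the $\{s,t\}$-pattern of $v$ is not $t^{k}$: it contains $s$'s as well. The argument still goes through---you obtain strings $\alpha$ (nonempty, containing only $s^{\pm1}$'s) and $\beta$ (containing at least one $t^{\pm1}$) with $\alpha\beta=\beta\alpha$ in the free monoid, hence both are powers of a common string, which is impossible---but the wording should be fixed so that it does not presuppose $s\notin\supp(v)$.

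The ``main obstacle'' dissolves without induction. Once Step~1 yields $\supp(u)=\supp(v)=\Delta$ with $u,v\neq 1$, the word $uv$ is \emph{automatically} cyclically reduced. Indeed, suppose $a$ is a first letter and $a^{-1}$ a last letter of $uv$, and work with the reduced spelling $u_1\cdots u_m v_1\cdots v_k$. For a letter $v_j$ to shuffle to position~$1$ its vertex would have to be distinct from, and adjacent to, the vertex of every $u_i$; in particular that vertex would lie outside $\supp(u)=\supp(v)$, contradicting that it lies in $\supp(v)$. So the first letter $a$ must come from $u$, giving a reduced factorization $u=au'$; dually the last letter $a^{-1}$ must come from $v$, giving $v=v'a^{-1}$ reduced. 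But this is exactly the ``split'' configuration you already ruled out: $vu=v'u'$ has length at most $|u|+|v|-2<|u|+|v|=|uv|=|vu|$, a contradiction. Hence $uv$ admits no cyclic reduction, Step~2 applies directly, and---together with the trivial degenerate case $u=1$ or $v=1$, which you noted---the proof closes.
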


\begin{lemma}\label{complement}
If a graph $\Gamma = (V, E)$ has its complement $\oG$ disconnected, then the graph is a join. That is to say, there exists a partition of vertices into two set $V_1$, $V_2$, such that for all $v_1\in V_1$ and $v_2 \in V_2$,
\[
(v_1, v_2) \in E
\] 
\end{lemma}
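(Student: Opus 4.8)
The statement is the classical observation that a graph is a join precisely when its complement is disconnected. The plan is to prove the contrapositive-style implication directly: assume $\oG$ is disconnected, extract a partition of $V$ from the connected components of $\oG$, and check that this partition exhibits $\Gamma$ as a join.

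First I would fix a connected component $C$ of $\oG$, and set $V_1 = C$ and $V_2 = V \setminus C$. Since $\oG$ is disconnected, $V_2$ is nonempty (and $V_1$ is nonempty by definition of a component), so this is a genuine partition into two nonempty sets. The key point is then to show that there are \emph{no} edges of $\oG$ between $V_1$ and $V_2$: indeed, if $v_1 \in V_1$ and $v_2 \in V_2$ were joined by an edge in $\oG$, then $v_2$ would lie in the same connected component of $\oG$ as $v_1$, namely $C = V_1$, contradicting $v_2 \in V_2 = V \setminus C$. Hence for every $v_1 \in V_1$ and $v_2 \in V_2$ we have $(v_1, v_2) \notin E'$, where $\oG = (V, E')$. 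By the definition of the complement, $(v_1, v_2) \notin E'$ is equivalent to $(v_1, v_2) \in E$. Therefore every vertex of $V_1$ is joined to every vertex of $V_2$ in $\Gamma$, which is exactly the assertion that $\Gamma$ is the join $A(\Gamma[V_1]) * A(\Gamma[V_2])$ at the level of graphs.

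This argument is essentially a one-paragraph unwinding of definitions, so there is no real obstacle; the only thing to be careful about is the trivial edge case where $\oG$ has no edges at all but more than one vertex (so it is ``disconnected'' as a graph on $\geq 2$ vertices). In that situation one may take $V_1$ to be any single vertex and $V_2$ the rest, and the conclusion that $\Gamma$ is a complete graph — hence a join in a degenerate but valid sense — still holds, since every pair of vertices spans an edge of $E$. I would simply note that the component-based partition above handles this case uniformly, since a component of an edgeless graph is a single vertex.
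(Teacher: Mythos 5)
Your proof is correct and takes essentially the same approach as the paper: both extract the vertex partition from a connected component of $\oG$, observe that no edge of $\oG$ joins $V_1$ to $V_2$, and conclude that every such pair is therefore an edge of $\Gamma$. Your version is a bit more explicit than the paper's (which leaves the origin of the partition $V_1, V_2$ implicit and reasons somewhat loosely about partitioning the edge set), and your note on the degenerate edgeless case is a harmless extra, but there is no substantive difference in strategy.
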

\begin{proof}
Since $\oG$ is disconnected, the edge set of $\oG$, we call $\overline E$ can be partitioned into two sets $E_1$ and $E_2$ such that all edges in $E_i$ connect vertices in $V_i$. That is to say, all the edges connecting vertices from $V_1$ to $V_2$ 
is in the complement of $\oG$, i.e. for every $v_1 \in V_1$ and $v_2 \in V_2$, 
\[
(v_1, v_2) \in \overline \oG = \Gamma.
\] 
That is to say there is a partition of vertices into two set $V_1$, $V_2$, such that for all $v_1\in V_1$ and $v_2 \in V_2$,
\[
(v_1, v_2) \in E.
\] 
\end{proof}

\subsection{Out(RAAG)}

Let $\Aut(A(\Gamma))$ be the automorphism group of $A(\Gamma)$. $\Aut(A(\Gamma))$ is finitely generated \cite{Ser89},\cite{Lau95} by the Laurence--Servatius generators, which fall into the following four types:
\begin{itemize}
\item Inversions: $\iota_v:v\mapsto v^{-1}$ and is the identity on all other generators.
\item Graph isomorphisms: any automorphism of $\Gamma$ induces a bijection of $V$ which preserves commutation relations,
\item Transvections: if $lk(v)\subseteq st(w)$, \[\tau_{v,w}:v\mapsto wv\] and fixes all other generators.   $\tau_{v,w}$ is called a \textit{twist} if $v$ and $w$ and a \textit{fold} otherwise.  
\item Partial conjugations: if $C$ is a component of $\Gamma\setminus st(w)$, \[\chi_{C,w}:v\mapsto wvw^{-1}\] and fixes all other generators.  
\end{itemize}
We let $\text{Out}(A(\Gamma))$ denote the outer automorphism group of $A(\Gamma)$; this is the quotient of $\Aut(A(\Gamma))$ by the normal subgroup of automorphisms generated by the action of $A(\Gamma)$ on itself by conjugation. $\text{Out}(A(\Gamma))$ is generated by images of the above Laurence--Servatius generators in the quotient.  
\begin{remark}
In what follows we will often define properties of elements $\Phi\in\out(\A)$. Since $\Phi=[\phi]$ is strictly speaking an equivalence class, when we say $\Phi$ has some property $\mathcal{P}$, we will mean that there is some representative $\phi \in\Aut(\A)$ which has property $\mathcal{P}$.  As an abuse of notation, we will not distinguish between $\phi$ and $[\phi]$, unless it is not clear from the context.
\end{remark}
\subsubsection{Pure automorphisms of $A(\Gamma)$}
In this section we introduce the notion of a \emph{pure} automorphism in $\out(\A)$. An element $\phi\in\Aut(\A)$  is \emph{pure} if it is cyclically reduced and 
\[ s \in supp(\phi(s)) \text{ for all } s\in V.\]
An equivalence class $[\phi]\in \out(\A)$ is pure if it has a pure representative in $\Aut(\A)$.
\begin{remark}\label{purepower}
If $\phi$ is a pure automorphism, then it follows from definition same representative $\phi^{k}$ is also a pure automorphism.
\end{remark}
\begin{proposition} \label{pure}
For every $\phi\in \Aut(\A)$, there exists a positive integer $N$ such that $\phi^N$ is a pure automorphism.
\end{proposition}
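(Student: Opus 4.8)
The plan is to decouple the two conditions in the definition of purity. Cyclic reducedness is a normal-form matter that I will handle at the end; the substantive point is to produce a power $\phi^{N}$ together with a representative satisfying $s\in\supp(\phi^{N}(s))$ for every $s\in V$. I would establish this by descending to the abelianization. Write $H=\A^{\mathrm{ab}}\cong\ZZ^{V}$ and let $M_{\phi}\in\GL_{V}(\ZZ)$ be the matrix through which $\phi$ acts on $H$; functoriality of abelianization gives $M_{\phi^{N}}=(M_{\phi})^{N}$. The key elementary observation is that if $(M_{\phi})_{ss}\ne 0$ then $s$ occurs in $\phi(s)$ with nonzero total exponent, and in particular $s\in\supp(\phi(s))$; moreover the total exponent of $s$ in $\phi(s)$ is invariant under conjugation, hence under replacing $\phi$ by a cyclically reduced representative of its outer class. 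So it is enough to find $N$ making every diagonal entry of $(M_{\phi})^{N}$ nonzero.

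To control the diagonal of powers of $M_{\phi}$, I would use the domination preorder $v\preceq w\iff\operatorname{lk}(v)\subseteq\operatorname{st}(w)$ on $V$, whose equivalence classes form a finite poset $\calC$. Inspecting the Laurence--Servatius generators one at a time shows $M_{\phi}$ belongs to the subgroup $G\le\GL_{V}(\ZZ)$ of matrices carrying each subspace $U_{\calD}=\langle v:[v]\in\calD\rangle$, $\calD$ a down-set of $\calC$, onto some $U_{\calD'}$: inversions are diagonal; a transvection $\tau_{v,w}$ with $v$ strictly below $w$ in $\preceq$ is triangular for any linear extension of $\preceq$; a transvection within an equivalence class stays inside one $U_{\calD}$; graph automorphisms preserve $\preceq$ and hence permute $\calC$; and partial conjugations act trivially on $H$. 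The induced homomorphism $G\to\operatorname{Aut}(\calC)$ has finite image, so for suitable $N_{1}$ the matrix $M_{\phi^{N_{1}}}=(M_{\phi})^{N_{1}}$ is honestly block upper-triangular for a fixed linear extension of $\preceq$, with diagonal blocks $B_{C}\in\GL_{k_{C}}(\ZZ)$ indexed by the classes $C\in\calC$, and $(M_{\phi^{N_{1}}})_{vv}=(B_{[v]})_{vv}$ for each $v$.

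What remains is a lemma about integer matrices, which I expect to be the main obstacle: for $B\in\GL_{k}(\ZZ)$ there is an integer $N_{2}\ge1$ such that every diagonal entry of $B^{mN_{2}}$ is nonzero for all $m\ge1$. This can be obtained from the Skolem--Mahler--Lech theorem, since each diagonal entry $n\mapsto(B^{n})_{ii}$ is an integer linear recurrence sequence with value $1$ at $n=0$; its zero set is therefore the union of a finite set and finitely many full arithmetic progressions, none containing $0$, and taking $N_{2}$ to be a sufficiently large multiple of the least common multiple of all moduli that occur makes $\{mN_{2}:m\ge1\}$ disjoint from every such zero set. (If $B$ has no non-roots-of-unity eigenvalues on the unit circle this is also accessible from the Jordan form directly.) Applying the lemma to each $B_{C}$ yields integers $N_{C}\ge1$, and with $N=N_{1}\cdot\operatorname{lcm}_{C\in\calC}N_{C}$ the matrix $M_{\phi^{N}}=(M_{\phi^{N_{1}}})^{N/N_{1}}$ is block upper-triangular with diagonal blocks $B_{C}^{N/N_{1}}$, each of which has nonzero diagonal because $N_{C}\mid N/N_{1}$. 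Hence $(M_{\phi^{N}})_{vv}=\bigl(B_{[v]}^{\,N/N_{1}}\bigr)_{vv}\ne 0$, so $v\in\supp(\phi^{N}(v))$ for every $v\in V$.

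It remains to arrange cyclic reducedness. I would pass to a representative $\psi$ of $[\phi^{N}]=[\phi]^{N}$ all of whose generator images $\psi(v)$ are cyclically reduced — a routine normal-form step. Since $\psi$ differs from $\phi^{N}$ by conjugation, the total exponent of $v$ in $\psi(v)$ equals that in $\phi^{N}(v)$, namely $(M_{\phi^{N}})_{vv}\ne 0$; in particular $v\in\supp(\psi(v))$ for every $v$. Thus $\psi$ is pure, and since $\phi^{N}$ is a power of $\phi$ this proves the proposition.
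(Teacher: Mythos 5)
The student starts the same way as the paper — pass to the abelianization $\ZZ^{V}$ and reduce to making every diagonal entry of $M_{\phi}^{N}$ nonzero — but from there takes a genuinely different and far heavier route. The paper simply reduces mod~$2$: since $\GL(|V|,\ZZ/2\ZZ)$ is finite, $(M_{\phi}\bmod 2)^{N}=I$ for some $N$, so every diagonal entry of $M_{\phi}^{N}$ is odd, hence nonzero, and $s\in\supp(\phi^{N}(s))$ follows immediately. You instead work integrally, set up a block-triangular structure from the domination preorder, and invoke the Skolem--Mahler--Lech theorem for each diagonal block. This is correct in spirit but vastly more machinery for the same conclusion, and it also introduces a gap.

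The gap is in the SML step. SML gives that the zero set of $n\mapsto(B^{n})_{ii}$ is a finite set together with finitely many arithmetic progressions, and you conclude ``none containing $0$'' because the value at $n=0$ is $1$. But SML does not guarantee those progressions are full residue classes: a progression of the form $\{d,2d,3d,\ldots\}$ avoids $0$ yet meets every positive multiple of $d$, and if such a progression occurred in the zero set then no $N_{2}$ of the kind you propose would work. To exclude it you must use that $B\in\GL_{k}(\ZZ)$: the Cayley--Hamilton recurrence for $(B^{n})_{ii}$ has constant term $\pm\det B=\pm1$, so it can be run \emph{backwards}; if $(B^{n})_{ii}$ vanished on a terminal residue class it would vanish down to $n=0$, contradicting $(B^{0})_{ii}=1$. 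That observation is missing from your lemma. (It also renders the whole block decomposition unnecessary — the backward-running argument applies directly to $M_{\phi}$ — and once one is willing to reduce mod~$2$, SML can be dropped altogether, which is exactly what the paper does.) Finally, the cyclic-reducedness clean-up is not quite as routine as stated, since a single conjugating element need not cyclically reduce every generator image simultaneously; the paper glosses over the same point, so this is a shared, minor issue rather than one specific to your write-up.
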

\begin{proof}
The abelianization of $A(\Gamma)$ is obtained from $A(\Gamma)$ by making all generators commute.  Hence $A(\Gamma)^{ab}\cong \ZZ^{|V|}$, generated by the equivalence classes $\{[v]|v\in V\}$. For any word $w\in A(\Gamma)$ we write \[[w]=\sum_{v\in V}n_v[v]\in \ZZ^{|V|}\]where $n_v$ is just the exponent sum of all $v$'s occurring in $w$. By reducing coefficients modulo 2, there is a natural surjection $\ZZ^{|V|}\rightarrow (\ZZ/2\ZZ)^{|V|}$. Observe that if $n_v\not\equiv 0 \mbox{ (mod 2)}$, then we must have $v\in supp(w)$.  

Since the kernel of the surjection $A(\Gamma)\rightarrow (\ZZ/2\ZZ)^{|V|}$ is characteristic, any automorphism $\phi$ of $A(\Gamma)$ induces an automorphism $A_\phi\in \GL(|V|,\ZZ/2\ZZ)$. The latter has finite order, hence for some $N$, we have $A_\phi^N=\text{Id}$. But this means that for any $s\in V$,  if we write \[[\phi^N(s)]=\sum_{v\in V}n_v[v], \] then \[n_v\equiv \left\{\begin{array}{c}  0 \mbox{ (mod 2)}, \mbox{ $v\neq s$}\\
1 \mbox{ (mod 2)}, \mbox{ $v= s$}
\end{array}\right.\]
Hence $\phi^N$ is pure, as desired.
\end{proof}

 \subsection{Dilatation}
 
 Consider an infinite order automorphism $\phi$ of $A(\Gamma)$. We will be interested in the dynamics of $\phi$ acting on $A(\Gamma)$.  One way to do this is to measure the asymptotic growth of the word length of elements of $A(\Gamma)$ under powers of $\phi$.
 \begin{definition}
 The \textit{dilatation of $\phi$ at $w$} is 
\[
\lambda_{\phi} (w): =\lim_{k \to \infty} \frac{\log|\phi^k(w)|}{k},
\]
and the \textit{dilatation of $\phi$} is 
\[
 \lambda_{\phi}  : =\sup _{w \in A(\Gamma)} \lambda_{\phi} (w)
\]
\end{definition}
Note that $\lambda_{\phi} (w)$ always exists since the sequence $\left\{\frac{\log|\phi^k(w)|}{k}\right\}$ is subadditive. We claim that the dilatation $\lambda_\phi$ is always realized by some word $w$, and in fact, we can take $w$ to be a generator:

\begin{lemma}\label{Lem:dilatationofgenerator}

\[
\lambda_{\phi}=\max_{s\in V}\{\lambda_\phi(s)\}
\]In particular, $\lambda_{\phi}$ is always finite and realized by some (not necessarily unique) generator $s_{i}$. 
\end{lemma}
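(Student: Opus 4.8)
The plan is to reduce everything to a single submultiplicative sequence built from the generators. For $k\ge 1$ set $L_k := \max_{s\in V}|\phi^k(s)|$. The first step is the elementary bound $|\phi^k(w)|\le |w|\,L_k$ for every reduced word $w$: writing $w=\ell_1\cdots\ell_r$ with $r=|w|$ and each $\ell_i\in V\cup V^{-1}$, we have the group equality $\phi^k(w)=\phi^k(\ell_1)\cdots\phi^k(\ell_r)$, so iterating the subadditivity $|uv|\le|u|+|v|$ from \defref{factorial} and using $|\phi^k(s^{-1})|=|\phi^k(s)|$ gives $|\phi^k(w)|\le\sum_{i=1}^r|\phi^k(\ell_i)|\le r\,L_k$. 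Applying this with $w=\phi^m(s)$ yields $L_{k+m}=\max_s|\phi^k(\phi^m(s))|\le\max_s|\phi^m(s)|\,L_k=L_mL_k$, so $\{L_k\}$ is submultiplicative; in particular $L_k\le L_1^k$, which already forces $\lambda_\phi(s)=\lim_k\frac{\log|\phi^k(s)|}{k}\le\log L_1<\infty$ for every generator $s$.

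Next I would identify the growth rate of $L_k$ with $\max_{s\in V}\lambda_\phi(s)$. Since $\frac{\log L_k}{k}=\max_{s\in V}\frac{\log|\phi^k(s)|}{k}$ is the maximum of finitely many sequences, each of which converges to $\lambda_\phi(s)$ (these limits exist, as already noted above), the sequence $\frac{\log L_k}{k}$ converges to $\max_{s\in V}\lambda_\phi(s)$. Combining this with the bound from the first paragraph, for any nontrivial word $w$ we obtain
\[
\lambda_\phi(w)=\lim_{k\to\infty}\frac{\log|\phi^k(w)|}{k}\le\lim_{k\to\infty}\left(\frac{\log|w|}{k}+\frac{\log L_k}{k}\right)=\max_{s\in V}\lambda_\phi(s).
\]
Taking the supremum over $w\in A(\Gamma)$ gives $\lambda_\phi\le\max_{s\in V}\lambda_\phi(s)$, while the reverse inequality is immediate because each generator is a word. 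Hence $\lambda_\phi=\max_{s\in V}\lambda_\phi(s)$, it is finite, and the maximum is attained at some generator $s_i$ since $V$ is finite.

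I do not expect a genuine obstacle. The only points requiring care are that verbal substitution followed by reduction never increases length — which is precisely where \defref{factorial} is invoked to get $|\phi^k(w)|\le|w|\,L_k$ — and the interchange of "maximum" with "limit", which is legitimate exactly because $V$ is finite and each limit $\lambda_\phi(s)$ is already known to exist. (Alternatively, one can avoid the interchange and use Fekete's lemma directly on the subadditive sequence $\log L_k$, noting that some fixed generator realizes the maximum defining $L_k$ infinitely often.) Everything else is routine bookkeeping with subadditive and submultiplicative sequences.
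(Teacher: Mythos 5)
Your proof takes essentially the same route as the paper's: both write $w$ as a product of $|w|$ letters, apply the triangle inequality to bound $|\phi^k(w)|$ by $|w|\cdot\max_{s\in V}|\phi^k(s)|$, take logarithms, divide by $k$, and pass to the limit so that the $\log|w|/k$ term vanishes. Your extra care about interchanging maximum with limit (finitely many convergent sequences) and the submultiplicativity of $L_k$ is a mild tightening of the paper's slightly informal passage to the limit, not a different argument.
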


\begin{proof}
Given any word $w$ in $A(\Gamma)$ we can write $w = s_1 s_2\cdots s_n$ for some generators $s_i\in V$.  Then
\begin{align*}|\phi^k(w)|&=|\phi^k(s_1)\cdots\phi^k(s_n)|\\
&\leq |\phi^k(s_1)|+\cdots+|\phi^k(s_n)|\\
&\leq n\max_i\{|\phi^k(s_i)|\}
\end{align*}
Taking log of both sides and dividing by $k$:
\[
\frac{\log|\phi^k(w)|}{k}\leq \frac{\log(n)}{k}+\max_i\left\{\frac{\log|\phi^k(s_i)|}{k}\right\}
\]
Since $n$ is fixed, the first term on the right goes to 0 as $k\rightarrow \infty$. We conclude that for any $w\in A(\Gamma)$ \[\lambda_\phi(w)\leq \max_{s\in V}\{\lambda_\phi(s)\}\] and therefore 
\[\lambda_\phi=\max_{s\in V}\{\lambda_\phi(s)\}.\]In particular, $\lambda_\phi$ exists and is realized by a generator.  

\end{proof}
Given $g\in A(\Gamma)$, let $\phi^g$ denote the composition of $\phi$ with conjugation by $g$. It is not difficult to see that $\lambda_\phi=\lambda_{\phi^{g}}$. Indeed, given $w\in \A$ we have:\begin{align*}|(g\phi g^{-1})^k(w)|&=|g\phi(g)\cdots\phi^{k-1}(g)\phi^k(w)\phi^{k-1}(g^{-1})\cdots\phi(g^{-1})g^{-1}| \\
&\leq 2\left(|g|+|\phi(g)|+\cdots +|\phi^{k-1}(g)|\right)+|\phi^k(w)|\\
&\leq 2k\max\{|g|,|\phi(g)|,\ldots, |\phi^{k-1}(g)|, |\phi^k(w)|\}
\end{align*}
Now applying $\log$ to both sides and dividing by $k$, this is at most $\lambda_\phi$ in the limit, since $\log(2k)/k\rightarrow 0$.  Thus, for all $w$, $\lambda_{\phi^{g}}(w)\leq\lambda_\phi$. By symmetry, $\lambda_{\phi^g}\geq\lambda_\phi$ and the thus $\lambda_\phi=\lambda_{\phi^{g}}$. Therefore, the dilatation is a well-defined invariant of the equivalence class $[\phi]\in \text{Out}(A(\Gamma))$. If $\lambda_{\phi}(s)>0$ or $\lambda_{\phi}>0$ is positive, we also say that the element $s$ \emph{grows exponentially} under the map $\phi$, or that $\phi$ is an \emph{exponentially growing map}.

Furthermore, we say a subgraph $\Delta\subseteq \Gamma$, is \emph{invariant} under $\phi$ if $\phi$ takes words whose support is in $\Delta$ to words whose support is in $\Delta$. Given an invariant subgraph  $\Delta \subset \Gamma$, one can define 
\[
\lambda|_{\Delta} : = \sup _{w \in A(\Delta)} \lambda_{\phi} (w).
\]

\subsection{Square maps}
If we identify $\A$ with the fundamental group of a Salvetti complex $S(\Gamma)$, then elements of $\out(A(\Gamma))$ as homotopy classes of maps from $S(\Gamma)$ to itself. Pairs of commuting generators correspond to 2-tori in $S(\Gamma)$, and we will be interested in when $\Phi\in \out(A(\Gamma))$ folds these tori nicely onto other tori in $S(\Gamma)$.  This idea is encoded in the following definition.

\begin{definition}\label{Def:Square}
Fixing the presentation of $A(\Gamma)$ to be the one associated with $\Gamma$ once and for all.  The map $\phi \in  \Aut(A(\Gamma))$ is called a \textit{square map} if for all $s_1,s_2 \in S$:
\[ s_1 \text{ commutes with } s_2 \qquad \Longrightarrow \qquad   supp(\phi(s_1)) \text{ commutes with } supp(\phi(s_2)). \]

We say that $\Phi\in \out(\A)$ is a square map if there exists a representative $\phi$ of $\Phi$ which is a square map.
\end{definition}


We say that an automorphism $\phi$ is a \emph{positive} automorphism if all generators are mapped to words that do not contain negative powers of any generator.
\begin{lemma}
A positive automorphism $\phi$ is a square automorphism.
\end{lemma}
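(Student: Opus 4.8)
The plan is to combine the structure of commuting cyclically reduced elements (Proposition~\ref{commutingwords} and Lemma~\ref{complement}) with the rigidity that comes from $\phi$ being an \emph{automorphism}, not merely a homomorphism — the naive statement ``commuting positive words have commuting supports'' is false (e.g. $bc$ and $abc$ commute and generate $\ZZ^2$ when $(a,b),(a,c)\in E$ but $(b,c)\notin E$), so automorphy must be used essentially.

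\textbf{Set-up.} Let $s_1,s_2\in V$ with $(s_1,s_2)\in E$, so $s_1\neq s_2$. Put $u=\phi(s_1)$, $v=\phi(s_2)$. Since $\phi$ is a homomorphism and $[s_1,s_2]=1$ we get $[u,v]=1$, and since $\phi$ is positive the reduced spelling of each of $u,v$ contains no inverse letters and hence is cyclically reduced. As $\phi$ restricts to an isomorphism $\langle s_1,s_2\rangle\xrightarrow{\ \sim\ }\langle u,v\rangle$, we have $\langle u,v\rangle\cong\ZZ^2$; in particular $u$ and $v$ are not both powers of a common element. We must show: for all $x\in\supp(u)$ and $y\in\supp(v)$, either $x=y$ or $(x,y)\in E$.

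\textbf{Reduction to atoms.} Because $u,v$ are positive, $uv$ is a reduced factorization, so $\supp(uv)=\supp(u)\cup\supp(v)$. Iterating Lemma~\ref{complement}, decompose the induced subgraph on $\supp(uv)$ into its join--atoms $\supp(uv)=W_1\sqcup\cdots\sqcup W_m$, where distinct $W_i$ commute and each $\Gamma$ restricted to $W_i$ has connected complement. Since distinct $W_i$ commute, factor $u=u^{(1)}\cdots u^{(m)}$, $v=v^{(1)}\cdots v^{(m)}$ with $\supp(u^{(i)})=\supp(u)\cap W_i$ and $\supp(v^{(i)})=\supp(v)\cap W_i$; projecting $uv=vu$ onto the special subgroup $A(W_i)$ forces $[u^{(i)},v^{(i)}]=1$, and $\supp(u^{(i)}v^{(i)})=\supp(uv)\cap W_i=W_i$. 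It now suffices to prove: whenever $u^{(i)}\neq 1\neq v^{(i)}$, one has $|W_i|=1$. Indeed, granting this, if $x\in\supp(u)\cap W_i$ and $y\in\supp(v)\cap W_j$ then either $i\neq j$ (so $(x,y)\in E$, as $W_i$ commutes with $W_j$) or $i=j$ with $u^{(i)},v^{(i)}$ both nontrivial, forcing $W_i=\{x\}=\{y\}$.

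\textbf{Using automorphy.} Fix $i$ with $u^{(i)}\neq 1\neq v^{(i)}$, and suppose for contradiction that $|W_i|\geq 2$. Since $u^{(i)}v^{(i)}$ is a reduced factorization of a commuting pair and the subgraph on $W_i$ has connected complement, Proposition~\ref{commutingwords} yields $h$ with $u^{(i)},v^{(i)}\in\langle h\rangle$; let $\rho$ be the primitive root, which (after possibly replacing $\rho$ by $\rho^{-1}$) is positive, so $u^{(i)}=\rho^{a}$, $v^{(i)}=\rho^{b}$ with $a,b>0$ and $\supp(\rho)=W_i$. All other factors $u^{(k)},v^{(k)}$ have support commuting with $W_i$, so $u$ and $v$ both commute with $\rho$; hence $\sigma:=\phi^{-1}(\rho)$ is primitive and commutes with $s_1$ and $s_2$. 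Thus $\sigma\in Z(s_1)\cap Z(s_2)=A(st(s_1))\cap A(st(s_2))=A\bigl(st(s_1)\cap st(s_2)\bigr)=\langle s_1,s_2\rangle\times A(L)$, where $L=lk(s_1)\cap lk(s_2)$ and $\langle s_1,s_2\rangle\cong\ZZ^2$ is central there. Writing $u=\rho^{a}m$, $v=\rho^{b}m'$ with $m,m'$ positive and supported on $\widehat{lk}(W_i):=\bigcap_{w\in W_i}lk(w)$ (disjoint from $W_i$), and applying $\phi^{-1}$, we get $s_1=\sigma^{a}\,\phi^{-1}(m)$ and $s_2=\sigma^{b}\,\phi^{-1}(m')$ with $\sigma$ commuting with $\phi^{-1}(m),\phi^{-1}(m')$. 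One then derives a contradiction from the fact that $\rho$ is ``irreducibly non-abelian'' (its support induces a subgraph with connected complement of size $\geq2$, so $\langle\rho\rangle$ is not conjugate into any standard free-abelian subgroup), together with automorphy: comparing the centralizer $Z(\rho)=\langle\rho\rangle\times A(\widehat{lk}(W_i))$ (the structure theorem for centralizers in RAAGs, \cite{Ser89}) with $Z(\sigma)$, and using that the preimage $\sigma$ lies inside $\langle s_1,s_2\rangle\times A(L)$ while the standard generators $s_1,s_2$ are forced to equal $\sigma^{a}$, resp. $\sigma^{b}$, times a $\sigma$-centralizing remainder, is incompatible unless $\supp(\sigma)$ lies in $A(L)$; a short further case analysis (applying the same dichotomy to $\sigma$) then closes the argument. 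Hence $|W_i|=1$, which by the reduction completes the proof.

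\textbf{Main obstacle.} Everything up to and including the atom decomposition is routine book-keeping with Proposition~\ref{commutingwords} and Lemma~\ref{complement}; the real content is the last step, namely showing that an automorphism of $\A$ cannot send a standard generator to a word one of whose non-abelian join--atoms is a power of a root with support of size $\geq2$ and connected complement that is \emph{shared} with the image of an adjacent generator. Equivalently, one must show automorphisms respect the abelian/irreducibly-non-abelian dichotomy finely enough — this invariance (of the conjugacy classes of elements whose cyclically reduced support has connected complement of size $\geq2$) is where the work lies, and is what I expect to be the hardest point to nail down cleanly.
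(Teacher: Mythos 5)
Your proposal correctly puts its finger on a genuine subtlety: it does \emph{not} follow from Lemma~\ref{complement} alone that the join decomposition of $supp(\phi(s_1)\phi(s_2))$ coincides with the partition into $supp(\phi(s_1))$ and $supp(\phi(s_2))$ --- those two sets need not even be disjoint --- and your $(bc,\,abc)$ example shows that the naive claim ``commuting, primitive, positive words have commuting supports'' is false in general. The paper's own proof is much shorter and elides exactly this point: it uses primitivity together with Proposition~\ref{commutingwords} to conclude that the complement of the induced subgraph on $supp(\phi(s_1)\phi(s_2))$ is disconnected, and then asserts without justification that the join produced by Lemma~\ref{complement} is precisely $supp(\phi(s_1))*supp(\phi(s_2))$, which is where the argument stops. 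Your reduction to join-atoms $W_1,\ldots,W_m$ is the right way to organize a careful version of this argument, and it goes beyond what the paper writes down.

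That said, as a proof your proposal is incomplete, and the gap sits exactly where the work is. The essential step --- ruling out a join-atom $W_i$ with $|W_i|\geq 2$ on which both $u^{(i)}$ and $v^{(i)}$ are nontrivial --- is set up but never closed. You produce the common root $\rho$, note that $\sigma=\phi^{-1}(\rho)$ lies in $Z(s_1)\cap Z(s_2)=\langle s_1,s_2\rangle\times A(L)$, and gesture at a centralizer comparison, but the contradiction is described only as ``a short further case analysis'' and you yourself flag it as ``the hardest point to nail down cleanly.'' Nothing in the text as written rules out, say, $\sigma$ being $s_1$ times an element of $A(L)$ with the remaining factors of $u$ and $v$ arranged compatibly; and the invariance of the abelian versus irreducibly-non-abelian dichotomy for join-atoms under automorphisms, which you identify as the crux, is stated as an expectation rather than proved. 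Until that final step is actually carried out, the proposal establishes that a failure of the lemma would force a rather rigid configuration, but it does not establish the lemma.
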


\begin{proof}
Since $\phi(s_1)$ and $\phi(s_2)$ are primitive elements, they are not powers of any other word. By assumption $\phi(s_1)$ and $\phi(s_2)$ commute. Also, since $\phi$ is a positive map,  both $\phi(s_1)\phi(s_2)$ and $\phi(s_2)\phi(s_1)$ are factorially reduced as defined in Definition~\ref{factorial}. Therefore, by Proposition~\ref{commutingwords}, the subgraph of $\Gamma $ generated by $supp(\phi(s_1))$ has disconnected complement. By Lemma~\ref{complement}, that means the subgraph of $\Gamma $ generated by $supp(\phi(s_1))$ is a join of $supp(\phi(s_1))$ and $supp(\phi(s_2))$, which means elements of 
$supp(\phi(s_1))$ and $supp(\phi(s_2))$ pairwise commute.
\end{proof}

 \begin{remark}
We show that square maps are not limited to positive maps with the following example. Let $A(\Gamma) = \langle a, b, c | [a, c], [b, c] \rangle$:
 \begin{align*}
 \phi: & a \to aba^{-1}\\
         & b \to ba^{-1}\\
         & c \to c\\
\end{align*}
\end{remark}

\begin{lemma}\label{power}
Let $\phi$ be a square map. Then $\phi^{k}$ is also a square map for any positive integer $k$.
\end{lemma}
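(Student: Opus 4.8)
The plan is to reduce to the case $k=2$ and then argue by induction, since if $\phi$ and $\psi$ are both square maps then so is $\psi\circ\phi$, and $\phi^k = \phi\circ\phi^{k-1}$. So the heart of the matter is: given square maps $\phi,\psi$, show $\psi\circ\phi$ is a square map. Fix commuting generators $s_1,s_2\in V$. We must show $\supp(\psi(\phi(s_1)))$ commutes with $\supp(\psi(\phi(s_2)))$. By the square property of $\phi$, we know $\supp(\phi(s_1))$ commutes with $\supp(\phi(s_2))$ — that is, every generator appearing in $\phi(s_1)$ commutes with every generator appearing in $\phi(s_2)$.

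The key step is then to establish the following elementary claim: if $u,v$ are reduced words in $A(\Gamma)$ with $\supp(u)$ commuting with $\supp(v)$, and $\psi$ is a square map, then $\supp(\psi(u))$ commutes with $\supp(\psi(v))$. To see this, write $u = t_1\cdots t_p$ and $v = t'_1\cdots t'_q$ as words in the generators (and their inverses); then $\supp(\psi(u)) \subseteq \bigcup_i \supp(\psi(t_i))$ and $\supp(\psi(v)) \subseteq \bigcup_j \supp(\psi(t'_j))$, using that $\psi(t^{-1}) = \psi(t)^{-1}$ has the same support as $\psi(t)$. Since each $t_i$ commutes with each $t'_j$ (as they lie in the commuting supports of $u$ and $v$), the square property of $\psi$ gives that $\supp(\psi(t_i))$ commutes with $\supp(\psi(t'_j))$ for all $i,j$. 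Hence every generator in $\supp(\psi(u))$ commutes with every generator in $\supp(\psi(v))$, proving the claim.

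Applying the claim with $u = \phi(s_1)$ and $v = \phi(s_2)$ then finishes the composition step, and induction on $k$ completes the proof.

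I do not expect a serious obstacle here; the only point requiring a little care is the bookkeeping that $\supp$ behaves well under concatenation and inversion (which is immediate from the definition, since $\supp$ is insensitive to cancellation — if $t$ cancels in forming the reduced word for $\psi(u)$, that only removes it from the support, and the inclusion $\supp(\psi(u)) \subseteq \bigcup_i \supp(\psi(t_i))$ still holds). One subtlety worth flagging: the square condition as stated quantifies over generators $s_1, s_2 \in S$ (the vertex set), not over arbitrary commuting elements, so the reduction to the elementary claim above — passing from commuting supports of words to commuting generators — is exactly what makes the argument go through.
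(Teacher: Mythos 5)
Your proposal is correct and takes essentially the same approach as the paper: the paper's proof also proceeds by induction on $k$, writing $\phi^{k-1}(s_1)$ and $\phi^{k-1}(s_2)$ as words in the generators and applying the square property of $\phi$ to each commuting pair of letters. Your version is a bit more explicit about the bookkeeping—isolating the composition step $\psi\circ\phi$, spelling out the inclusion $\supp(\psi(u))\subseteq\bigcup_i\supp(\psi(t_i))$, and noting that cancellation only shrinks support—but the underlying argument is the same.
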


\begin{proof}
We prove the statement by induction on $k$. 
\[\phi^{k-1}(s_{1}) = s_{11}s_{12}...s_{1m} \qquad \text{ and } \qquad \phi^{k-1}(s_{2}) = s_{21}s_{22}...s_{2n} \]
Given that $s_{1i}$ and $s_{2j}$ commutes for all $i, j$  then under the square map $\phi$,  $supp(\phi(s_{1i}))$ and $supp(\phi(s_{1j}))$ pairwise commute for all $i, j$. That is to say, $supp(\phi^{k}(s_1))$ and $supp(\phi^{k}(s_{2}))$ pairwise commute.
\end{proof}

For the rest of the paper we assume $\phi$ is a pure, square automorphism.

\section{Automorphism Diagram}
The main tool of this paper is a directed graph $\calD_{\phi}$ that is defined on the generating set of the group $\A$. We deduce quantitative and structural information on the dilatation of $\phi$ from  the combinatorics of $\calD_{\phi}$.

\begin{definition}\label{autodiagram}
The vertex set of $\calD_{\phi}$ is $V$. Two vertices $s_i, s_j$ are connected by a directed edge from $s_i$ to $s_j$ if and only if $s_j \in \supp(s_i)$. 
\end{definition}

In this paper we are only concerned with \emph{pure} automorphisms; however, we do not add directed edges from $s_i$ to itself for any $i$. That is to say, only \emph{distinct} vertices $s_i, s_j$ can be connected by a directed edge in $\calD_{\phi}$. On the other hand, it is possible to have a directed edge from $s_i$ to $s_j$ and from $s_j$ to $s_i$.

For the rest of the section, assume that $\phi$ is a pure square map and $\phi \in \out(\A)$. Given a vertex $s \in \calD_{\phi}$, the \emph{down-set} of $s$ is an induced subgraph whose vertex set is the following union:
\[
\{ v | \text { there exists a directed path from } s \text{ to }v \}.
\]

The induced graph on this set of vertices we call $down-set$ and denote it by $d(s)$. Note that $s\in d(s)$, as the constant path is directed.  By construction,

\begin{equation}\label{dilatationofdownset}
\lambda_{s} = \lambda_{d(s)}.
\end{equation}

\begin{lemma}\label{Lem:path}
Suppose there is a directed path in $\calD_{\phi}$ of length $n$:
\[
(s_0, s_1, s_2,...s_n)
\]
and suppose that $s_0$ commutes with $s_1$. Then $\{s_0, s_1, s_2,...s_n\}$ form a complete subgraph in $\Gamma$.
\end{lemma}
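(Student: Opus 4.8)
The plan is to prove the slightly stronger statement by induction on $k$, namely $Q(k)$: \emph{the generators $s_0, s_1, \dots, s_k$ pairwise commute in $\Gamma$}, for $1 \le k \le n$. (I record once and for all that the edge condition in Definition~\ref{autodiagram} is meant to read $s_j \in \supp(\phi(s_i))$, so that a directed edge $s_i \to s_j$ of $\calD_\phi$ encodes exactly the membership $s_j \in \supp(\phi(s_i))$.) The base case $Q(1)$ is precisely the hypothesis: $s_0$ and $s_1$ are consecutive vertices of a directed path, hence distinct, and by assumption they commute, that is $(s_0, s_1) \in E$. The case $Q(n)$ is the desired conclusion that $\{s_0, \dots, s_n\}$ spans a complete subgraph.

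For the inductive step, assume $Q(k)$ with $1 \le k < n$; it suffices to show that $s_{k+1}$ commutes with $s_j$ for every $0 \le j \le k$, after which $Q(k+1)$ follows by combining with $Q(k)$. The directed edge $s_k \to s_{k+1}$ gives $s_{k+1} \in \supp(\phi(s_k))$, and this is the only fact about $s_{k+1}$ I will use. First, for $j = k$: since $k \ge 1$, the vertices $s_{k-1}$ and $s_k$ are distinct (consecutive on the path) and commute by $Q(k)$, so the square map condition (Definition~\ref{Def:Square}) applied to this pair shows that $\supp(\phi(s_{k-1}))$ commutes with $\supp(\phi(s_k))$; as $s_k \in \supp(\phi(s_{k-1}))$ (from the edge $s_{k-1}\to s_k$) and $s_{k+1}\in \supp(\phi(s_k))$, we conclude that $s_k$ commutes with $s_{k+1}$. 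Next, for $j < k$: if $s_j$ and $s_k$ denote the same vertex, we are reduced to the case just handled; otherwise $s_j$ and $s_k$ are distinct and commute by $Q(k)$, so the square map condition gives that $\supp(\phi(s_j))$ commutes with $\supp(\phi(s_k))$, and since $\phi$ is pure we have $s_j \in \supp(\phi(s_j))$, which together with $s_{k+1}\in\supp(\phi(s_k))$ yields that $s_j$ commutes with $s_{k+1}$. This proves $Q(k+1)$ and closes the induction.

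The two engines of the argument are \emph{purity}, which places each generator $s_j$ inside $\supp(\phi(s_j))$ so that it can be paired against $s_{k+1}$ through the square map implication, and the \emph{square map condition}, which transports commutation of a pair of generators to commutation of their entire images. I do not expect a genuine obstacle here; the only care required is bookkeeping. The subtlest point is that a directed path may revisit a vertex, so the index inequality $j < k$ does not by itself guarantee $s_j \neq s_k$ as group elements — which is exactly why the case split above treats $s_j = s_k$ separately. A related point: in each application of the square map condition the pair of generators involved is a pair of \emph{distinct} commuting generators, so the hypothesis of Definition~\ref{Def:Square} is unambiguously met.
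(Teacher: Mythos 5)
Your proof is correct and uses the same engine as the paper's: the square map condition to transmit commutation of a pair of generators to their $\phi$-images, together with purity to place $s_j\in\supp(\phi(s_j))$. You organize it as a single forward induction on the prefix $\{s_0,\dots,s_k\}$, whereas the paper first shows consecutive pairs commute and then runs a separate induction on $n$ by splitting the path into the two subpaths $(s_0,\dots,s_{n-1})$ and $(s_1,\dots,s_n)$ — mathematically the same content, just different bookkeeping. One small advantage of your version is that you explicitly treat the case $s_j=s_k$ (a path in $\calD_\phi$ can revisit vertices); the paper's second induction silently assumes $s_0\neq s_{n-1}$ when it applies the square map condition to that pair.
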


\begin{proof}
First we show that $s_i$ and $s_{i+1}$ commute for all $i$.  Since $s_0$ and $s_1$ commute,  $\phi(s_0)$ and $\phi(s_1)$ commute. By assumption, $\phi$ is pure and a square map, 
%

thus all elements of $\supp(\phi(s_1))$ commute with all elements of $\supp(\phi(s_2))$. In particular,
\[
s_1 \in \supp(\phi(s_0)) \text{ and } s_2 \in \supp(\phi(s_1)),
\]
thus, $s_1$ commutes with $s_2$. Applying this argument to $s_i$ and $s_{i+1}$ for all $i$, we  conclude that $s_i$ commutes with $s_{i+1}$ for all $i$. We finish the proof by proving the following statement:  Suppose $(s_0,\ldots,s_n)$ is a directed path in $\calD_\phi$ in which $s_i$ commutes with $s_{i+1}$ for all $0\leq i\leq n-1$, then $\{s_i\}$ form a complete subgraph. 

We prove this statement by induction on $n$.  The case $n=1$ is trivial. Now consider a path $(s_0,\ldots, s_n)$ such that $s_i$ and $s_{i+1}$ commute for all $i$. The subpaths $(s_0,\ldots, s_{n-1})$ and $(s_1,\ldots, s_n)$ have length $n-1$ hence by induction, it suffices to show that $s_0$ and $s_n$ commute.  Since $s_0$ and $s_{n-1}$ commute, by the argument above, every element in $\supp(\phi(s_0))$ commutes with every element in $\supp(\phi(s_{n-1}))$. Since $\phi$ is pure,  $s_0 \in \supp(\phi(s_0))$. Given that $s_n \in \supp(\phi(s_{n-1}))$,   $\phi$ is a square map means $s_0$ commutes with $s_n$. Therefore, all pairs of elements in $\{s_0, s_1, s_2,...s_n\}$ commute and $\{s_0, s_1, s_2,...s_n\}$ forms a complete subgraph in $\Gamma$.

\end{proof}

Now suppose instead of a directed path, there is a cycle $(s_0, s_1, s_2,...s_n,s_0)$ in $\calD_{\phi}$, 
\begin{corollary}\label{Cor:zgroupadjacent}
If an adjacent pair of elements of the cycle commute in $\A$, then 
\[
\{s_0, s_1, s_2,...s_n\}
\]
form a complete subgraph in $\Gamma$.
\end{corollary}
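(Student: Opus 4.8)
The plan is to deduce this immediately from \lemref{Lem:path} by ``unrolling'' the cycle into a directed path that begins at the adjacent commuting pair. Suppose the cycle is $\gamma = (s_0, s_1, \ldots, s_n, s_0)$ and that $s_k$ commutes with $s_{k+1}$ for some index $k$ (with the convention $s_{n+1} = s_0$, so that the case $k=n$ covers the ``wrap-around'' edge $s_n \to s_0$).

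First I would reindex cyclically so as to traverse $\gamma$ once starting from $s_k$: this produces the directed path
\[
\big(s_k, s_{k+1}, \ldots, s_n, s_0, s_1, \ldots, s_k\big)
\]
of length $n+1$, every consecutive pair of which is a directed edge of $\calD_\phi$ because it is an edge of the cycle $\gamma$. Its first two vertices are $s_k$ and $s_{k+1}$, which commute in $\A$ by hypothesis, and the set of vertices appearing along this path is exactly $\{s_0, s_1, \ldots, s_n\}$. Applying \lemref{Lem:path} to this path (valid since $n+1 \geq 2$, so the path has positive length; note also that $\phi$ is a pure square map by the standing assumption) yields that $\{s_0, s_1, \ldots, s_n\}$ spans a complete subgraph of $\Gamma$, which is the claim.

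There is no genuine obstacle here: the only point requiring a little care is the bookkeeping of indices when the commuting edge is the closing edge $s_n \to s_0$ of the cycle, and the observation that traversing $\gamma$ exactly once already exhausts all its vertices, so a single application of \lemref{Lem:path} suffices.
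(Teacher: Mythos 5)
Your proof is correct and follows essentially the same approach as the paper: unroll the cycle into a directed path starting at the commuting adjacent pair and apply Lemma~\ref{Lem:path}. If anything, your bookkeeping is slightly more careful than the paper's (which appears to omit $s_0$ from the displayed path by a typo), but the idea is identical.
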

\begin{proof}
Suppose $s_{i}$ and $s_{i+1}$ commute, then consider the path 
\[
\{s_{i}, s_{i+1}, s_{i+2},...s_{n}, s_{1}, s_{2},...s_{i-1} \}
\]
is a directed path in $\calD_{\phi}$. By Lemma~\ref{Lem:path}, all elements on this path form a complete subgraph in $\calD_{\phi}$.
\end{proof}

\begin{corollary}\label{Cor:zgroup}
If a non-adjacent pair of elements of the cycle commute in $\A$, then 
\[
\{s_0, s_1, s_2,...s_k\}
\]
form a complete subgroup in $\Gamma$.
\end{corollary}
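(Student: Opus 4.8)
The plan is to reduce to the adjacent case handled in Corollary~\ref{Cor:zgroupadjacent}. Suppose $s_i$ and $s_j$ commute where $i < j$ and $j \neq i+1$ (indices read cyclically, so also $j \neq i-1$). The key observation is that a cycle in $\calD_\phi$ is a closed directed path, so starting from $s_i$ we can read off the directed subpath $(s_i, s_{i+1}, \ldots, s_j)$, which has some length $\ell \geq 2$ with $s_i$ at one end and $s_j$ at the other. I want to apply Lemma~\ref{Lem:path} to a directed path whose two endpoints are $s_i$ and $s_j$ — but Lemma~\ref{Lem:path} requires the commuting pair to be the \emph{first two} vertices of the path, not the two endpoints. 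So the first step is to observe that the hypothesis of Lemma~\ref{Lem:path} can be weakened: if a directed path $(t_0, t_1, \ldots, t_m)$ has $t_0$ commuting with $t_m$ (the endpoints), then the same conclusion holds. Indeed, re-examining the proof of Lemma~\ref{Lem:path}, the crucial input was only that \emph{some} adjacent pair along the path commutes (this is what the induction on the statement ``if $s_i$ commutes with $s_{i+1}$ for all $i$'' ultimately rests on, via the base case and the propagation argument). More directly: from the cycle, the directed subpath $(s_j, s_{j+1}, \ldots, s_i)$ (going the other way around) \emph{also} connects $s_j$ to $s_i$; concatenating, we may view $(s_i, s_{i+1}, \ldots, s_j)$ as a directed path in which $s_i$ commutes with $s_j$, and then the trick is to close it up.

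Here is the cleaner route I would actually write. Since $s_i$ commutes with $s_j$, we have $\phi(s_i)$ commutes with $\phi(s_j)$, so by purity and the square-map property every element of $\supp(\phi(s_i))$ commutes with every element of $\supp(\phi(s_j))$; in particular, since $s_{i+1} \in \supp(\phi(s_i))$ and $s_j \in \supp(\phi(s_j))$ (using purity for the latter), $s_{i+1}$ commutes with $s_j$. Now $s_{i+1}$ and $s_j$ are both vertices of the cycle. If they happen to be adjacent on the cycle, we are done by Corollary~\ref{Cor:zgroupadjacent}. Otherwise we repeat: $s_{i+1}$ commutes with $s_j$ forces $s_{i+2}$ to commute with $s_j$, and so on. After finitely many steps we arrive at a pair $s_{j-1}, s_j$ (or $s_i, s_{i+1}$, depending on direction), which are adjacent on the cycle and commute — so Corollary~\ref{Cor:zgroupadjacent} applies and $\{s_0, \ldots, s_k\}$ spans a complete subgraph.

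The main obstacle is simply bookkeeping the cyclic indices correctly: one must make sure the ``walk'' $s_i, s_{i+1}, s_{i+2}, \ldots$ toward $s_j$ genuinely decreases the cyclic gap and terminates at an adjacent pair, and that at each stage the pair in question still consists of two vertices \emph{of the cycle} (which is automatic since we only ever advance the first index along the cycle). There is also a mild case-check: the cycle might be short, or the two commuting vertices might be ``almost adjacent'' in both directions, but in every case the gap strictly shrinks under $s_m \rightsquigarrow s_{m+1}$, so after at most $n$ steps we reach the adjacent situation. No new ideas beyond Lemma~\ref{Lem:path} and Corollary~\ref{Cor:zgroupadjacent} are needed; everything is a propagation-of-commutativity argument using purity plus the square-map hypothesis.
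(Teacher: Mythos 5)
Your proof is correct, but it takes a genuinely different route from the paper's. The paper passes to the power $\phi^j$, using Lemma~\ref{power} and Remark~\ref{purepower} to establish that $\phi^j$ is still a pure square map; the intent is presumably that $\calD_{\phi^j}$ then contains a directed edge from $s_0$ to $s_j$, turning the commuting pair into an adjacent pair so that Corollary~\ref{Cor:zgroupadjacent} can be invoked. The paper does not actually carry out this final step, and making it rigorous would require verifying that $s_j \in \supp(\phi^j(s_0))$ and that a suitable cycle through $s_0$ and $s_j$ persists in $\calD_{\phi^j}$ --- neither of which is immediate once cancellation is taken into account. Your propagation argument avoids powers entirely: since $s_i$ commutes with $s_j$, the square-map property gives that every element of $\supp(\phi(s_i))$ commutes with every element of $\supp(\phi(s_j))$; purity supplies $s_j \in \supp(\phi(s_j))$, the cycle supplies $s_{i+1} \in \supp(\phi(s_i))$, so $s_{i+1}$ commutes with $s_j$, and iterating marches the first index along the cycle until the commuting pair is adjacent, whereupon Corollary~\ref{Cor:zgroupadjacent} finishes. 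This stays at the level of $\phi$ itself, is fully self-contained, and is arguably tighter than what the paper sketches. One suggestion: your first paragraph (about weakening Lemma~\ref{Lem:path} to endpoint-commuting pairs, or concatenating the two arcs of the cycle) is a distraction you ultimately discard; the second paragraph is the real argument and stands alone, so the write-up would read better without the preliminary detour.
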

\begin{proof}
Without loss of generality, suppose $s_0$ commutes with $s_j$ for some $j \neq 2$.  We first claim that $\phi^j$ inherits the desirable 
properties of $\phi$:
\begin{claim}Let $\Phi=[\phi] \in \out(A(\Gamma))$ be a pure, square automorphism. Then $\phi^j, j\in \NN$ is also a pure, square automorphism.
\end{claim}
\noindent The fact that $\phi^{j}$ is a square automorphism follows from Remark~\ref{power}. The rest of the claim follows from Remark~ \ref{purepower}.

\end{proof}

From Corollary~\ref{Cor:zgroupadjacent} and Corollary~\ref{Cor:zgroup} we conclude that, 
\begin{proposition}\label{Prop:abelianorfree}
Every cycle $C \subseteq \calD_{\phi}$ forms an induced subgraph in $\Gamma$ that is either a complete graph or an empty graph.
\end{proposition}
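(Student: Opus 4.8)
The plan is to obtain the dichotomy as an immediate consequence of Corollaries~\ref{Cor:zgroupadjacent} and~\ref{Cor:zgroup}, once one observes that between them they account for every pair of vertices that can occur on a cycle. Write the cycle as $C=(s_0,s_1,\ldots,s_n)$ with $s_n=s_0$, let $W=\{s_0,s_1,\ldots,s_{n-1}\}$ be its underlying set of vertices, and recall that in $\Gamma$ two distinct vertices are joined by an edge if and only if the corresponding generators of $\A$ commute. Thus the induced subgraph of $\Gamma$ on $W$ is complete precisely when every pair of elements of $W$ commutes, and empty precisely when no pair commutes; the proposition asserts that one of these two extremes always holds.

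I would argue by cases according to whether some pair of distinct vertices of $W$ commutes in $\A$. If no such pair exists, then the induced subgraph of $\Gamma$ on $W$ has no edges at all, hence is an empty graph, and we are done. Otherwise, fix a commuting pair $s_a,s_b\in W$ with $a\ne b$ and split further according to whether $s_a$ and $s_b$ are adjacent on $C$ --- that is, appear consecutively in the cyclic order --- or not. If they are adjacent, Corollary~\ref{Cor:zgroupadjacent} shows that $W$ spans a complete subgraph of $\Gamma$; if they are not adjacent, Corollary~\ref{Cor:zgroup} gives exactly the same conclusion. In either sub-case $W$ induces a complete graph, which finishes the proof.

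The one point needing care --- and the closest the argument comes to an obstacle --- is the verification that these cases are genuinely exhaustive: every unordered pair of distinct vertices of $W$ either is or is not a pair of cyclically consecutive vertices of $C$, so whenever a commuting pair exists, one of the two corollaries applies to it. This is immediate from the definition of a cycle. One should also bear in mind that, by the Remark following the definition of a cycle, a vertex may be visited several times along $C$; this causes no difficulty, since $W$ is the underlying vertex set and the corollaries are invoked only for whichever pair of its elements is found to commute (and in the adjacent case one simply uses an occurrence at which the two vertices are consecutive). With that observation the proof is complete.
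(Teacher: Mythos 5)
Your argument is correct and is essentially identical to the paper's: split on whether any pair of vertices of the cycle commute (if none do, the induced subgraph is empty), and if some pair does, apply Corollary~\ref{Cor:zgroupadjacent} or Corollary~\ref{Cor:zgroup} according as that pair is cyclically consecutive or not. Your additional remarks on exhaustiveness of the cases and on repeated vertices are just a spelled-out version of what the paper leaves implicit.
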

\begin{proof}
Combine Corollary~\ref{Cor:zgroupadjacent}  and Corollary~\ref{Cor:zgroup}; if there is any pair of elements of $C$ that commutes then elements of $C$ form a complete subgraph in $\Gamma$. Otherwise no pair of elements of $C$ commute and the elements of $C$ form an empty graph in $\Gamma$.
\end{proof}

Moreover, if two cycles in $D_{\phi}$ share a vertex, then the union of all vertices support an empty or a complete subgraph in $\Gamma$:
\begin{corollary}\label{Lem:twocycles}
Let $s \in V$ be part of two (directed) cycles $C_1, C_2$, and there is a pair of commuting elements in $C_1 \cup C_2$, then the union of the vertices of $C_1$ and $C_2$ form a complete subgraph in $\Gamma$.
\end{corollary}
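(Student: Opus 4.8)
The plan is to reduce the statement to Proposition~\ref{Prop:abelianorfree} by splicing the two cycles together at their common vertex. Write $C_1=(s,a_1,\ldots,a_p,s)$ and $C_2=(s,b_1,\ldots,b_q,s)$ for the two directed cycles based at $s$. First I would form the concatenation
$C=(s,a_1,\ldots,a_p,s,b_1,\ldots,b_q,s)$,
which is again a sequence of vertices in which each consecutive pair is a directed edge of $\calD_\phi$ (the pair $(a_p,s)$ is the last edge of $C_1$ and $(s,b_1)$ the first edge of $C_2$) and whose first and last vertex agree; hence $C$ is a directed cycle in the sense of this paper. This step uses the convention, recorded in the Remark following the definition of a cycle, that a cycle is allowed to repeat vertices: here $s$ appears three times, and $C_1$ and $C_2$ may share further vertices, but this causes no problem. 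By construction the vertex set of $C$ is precisely $V(C_1)\cup V(C_2)$.

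Next I would apply Proposition~\ref{Prop:abelianorfree} to the cycle $C$: the induced subgraph of $\Gamma$ on $V(C)=V(C_1)\cup V(C_2)$ is either a complete graph or an empty graph. By hypothesis there is a commuting pair among the vertices of $C_1\cup C_2$, so the induced subgraph on $V(C)$ contains at least one edge and therefore is not empty. Consequently it is complete, i.e.\ every pair of vertices of $V(C_1)\cup V(C_2)$ commutes, which is exactly the assertion.

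I do not expect any genuine obstacle here; the content of the corollary is entirely carried by Proposition~\ref{Prop:abelianorfree} once one observes that two cycles sharing a vertex can be concatenated into a single cycle. The only point deserving a sentence of care is the legitimacy of this concatenation as a ``cycle,'' which is precisely where the paper's nonstandard (repetition-allowing) notion of cycle is used. If one preferred to bypass Proposition~\ref{Prop:abelianorfree}, one could instead rotate $C$ so that a chosen commuting pair occupies a controlled position and invoke Corollary~\ref{Cor:zgroupadjacent} or Corollary~\ref{Cor:zgroup} directly, but this is strictly more cumbersome and yields nothing new.
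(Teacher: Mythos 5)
Your proof is correct and essentially matches the paper's own approach: the remark immediately following the corollary observes that concatenating $C_1$ and $C_2$ at the common vertex $s$ yields a single cycle (with $s$ repeated), to which the dichotomy of Proposition~\ref{Prop:abelianorfree} (equivalently, Corollaries~\ref{Cor:zgroupadjacent} and~\ref{Cor:zgroup}) applies. Your version is, if anything, slightly cleaner than the paper's main proof body, which unnecessarily splits into cases before arriving at the same splice-and-apply conclusion.
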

\begin{proof}
First suppose the two commuting vertices belong to one $C_i$ and without loss of generality that $i=1$. By Corollary~\ref{Cor:zgroup}, elements of $C_1$ form a complete subgraph. Every element of $C_1$ is connected by a directed path to every element of $C_2$. Indeed, let $s = s_{(1,1)} = s_{(2,1)}$. To connect the $i$-th element in $C_{1}$, $s_{(1, i)}$ and the $j$-th element in $C_{2}$, $s_{(2, j)}$, we have:
\[
s_{(1, i)}, s_{(1, i+1)}, s_{(1, i+2)}...s_{(1, 1)}=s_{(2,1)}, s_{(2,2)},...s_{(2, j)}.
\]
Therefore by Lemma~\ref{Lem:path}, every element of $C_1$ commutes with every element of $C_2$. It remains to consider elements of $C_2$.

It remains to consider the case when the commuting pair consists of one vertex from each cycle. That is, there exists $s_{(1, i)} \in C_1$ and $s_{(2, j)} \in C_2$ commute. There exists a cycle that contains $s_{(1, i)}$ and $s_{(2, j)}$. Indeed,
\[
s_{(1, i)}, s_{(1, i+1)}...s=s_{(2,1)}, s_{(2,2)},...s_{(2, j)}, s_{(2, j+1)}...s=s_{(1,1)}, s_{(1,2)}...s_{(1,i)}
\] 
forms a cycle. By Corollary~\ref{Cor:zgroup}, all elements in $C_1 \cup C_2$ form a complete subgraph.
\end{proof}
\begin{remark}
In short, it suffices to realize that 
\[s=s_{(1,1)}, s_{(1,2)}...s=s_{(2,1)}, s_{(2,2)},...s\]
is a cycle(with repeated vertex $s$), applying Corollary~\ref{Cor:zgroupadjacent}  and Corollary~\ref{Cor:zgroup} proves Corollary~\ref{Lem:twocycles}.
\end{remark}
Lastly, we use $\overline{\calD_{\phi}}$ to denote the underlying graph of $\calD_{\phi}$, that is, the graph whose vertex set is the vertex set of $\calD_{\phi}$, and $(v, v')$ is 
an (undirected) edge in $\overline{\calD_{\phi}}$ if there is a directed edge in $\calD_{\phi}$ either from $v$ to $v'$ or from $v'$ to $v$. We say that $\calD_{\phi}$ is \emph{connected} if 
$\overline{\calD_{\phi}}$ is connected. Otherwise we say that $\calD_{\phi}$ is \emph{disconnected}. 
We make the following observation about the dilatation of $\phi$ when $\calD_{\phi}$ is disconnected.

\begin{lemma}\label{component}
Suppose the connected components of $\calD_{\phi}$ is $(\calD_{\phi})_{1}, (\calD_{\phi})_{2},...(\calD_{\phi})_{k}$, then  there exists a $(\calD_{\phi})_{i}$ such that
\[
\lambda_{\phi} =  \lambda|_{(\calD_{\phi})_{i}}.
\]
\end{lemma}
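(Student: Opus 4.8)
The plan is to observe that each connected component of $\calD_\phi$ determines a $\phi$--invariant induced subgraph of $\Gamma$, and then to single out the component containing a generator on which $\lambda_\phi$ is attained.

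First I would record the structural point. By Definition~\ref{autodiagram}, every directed edge leaving a vertex $s$ terminates at a vertex of $\supp(\phi(s))$; hence $\supp(\phi(s))$ is entirely contained in the connected component of $s$ in $\overline{\calD_{\phi}}$. Let $\Delta_i \subseteq \Gamma$ be the induced subgraph on the vertex set of the component $(\calD_\phi)_i$. Then for each generator $s$ of the special subgroup $A(\Delta_i)$ we have $\supp(\phi(s)) \subseteq \Delta_i$, so $\phi\big(A(\Delta_i)\big) \subseteq A(\Delta_i)$, and $\Delta_i$ is invariant in the sense of \S2. In particular $\phi^k\big(A(\Delta_i)\big)\subseteq A(\Delta_i)$ for all $k$, and since a word in the generators $V(\Delta_i)$ is reduced in $A(\Delta_i)$ exactly when it is reduced in $A(\Gamma)$ (the defining relators of $A(\Delta_i)$ are precisely those relators of $A(\Gamma)$ involving only these generators), the length $|\phi^k(w)|$ for $w\in A(\Delta_i)$ is the same whether measured in $A(\Delta_i)$ or in $A(\Gamma)$. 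Thus $\lambda|_{(\calD_\phi)_i}$ is well defined and agrees with the dilatation computed inside $A(\Delta_i)$.

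Next I would invoke Lemma~\ref{Lem:dilatationofgenerator} to fix a generator $s\in V$ with $\lambda_\phi(s)=\lambda_\phi$, and let $(\calD_\phi)_i$ be its component. Since $s\in A(\Delta_i)$,
\[
\lambda_\phi=\lambda_\phi(s)\le \sup_{w\in A(\Delta_i)}\lambda_\phi(w)=\lambda|_{(\calD_\phi)_i}.
\]
Conversely $\lambda|_{(\calD_\phi)_i}=\sup_{w\in A(\Delta_i)}\lambda_\phi(w)\le \sup_{w\in A(\Gamma)}\lambda_\phi(w)=\lambda_\phi$ because $A(\Delta_i)\subseteq A(\Gamma)$. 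Combining the two inequalities gives $\lambda_\phi=\lambda|_{(\calD_\phi)_i}$.

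There is no genuine obstacle here: the only thing requiring care is the bookkeeping in the first paragraph — that the components of $\calD_\phi$ are honestly $\phi$--invariant and that restricting to a special subgroup does not distort word length — and both facts fall straight out of the definition of $\calD_\phi$ and of reduced words in a right-angled Artin group. The conclusion is then a one-line squeeze from Lemma~\ref{Lem:dilatationofgenerator}.
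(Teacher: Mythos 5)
Your proof is correct and takes essentially the same approach as the paper: both arguments fix a dilatation-realizing generator $s$ via Lemma~\ref{Lem:dilatationofgenerator} and squeeze $\lambda|_{(\calD_{\phi})_{i}}$ between $\lambda_{\phi}(s)$ and $\lambda_{\phi}$. You spell out some bookkeeping the paper leaves implicit (that each component is $\phi$--invariant and that word length in $A(\Delta_i)$ agrees with word length in $A(\Gamma)$), and you argue directly from $s\in A(\Delta_i)$ rather than routing through the down-set $d(s)$ and Equation~\ref{dilatationofdownset} as the paper does, but these are cosmetic differences and the core squeeze is identical.
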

\begin{proof}
For each generator $s$, by Equation~\ref{dilatationofdownset}, 
\[
\lambda_{s} = \lambda|_{d(s)}
\]
If $s \in (\calD_{\phi})_{j}$, then $d(s) \subset (\calD_{\phi})_{j}$, therefore 
\[ \lambda|_{(\calD_{\phi})_{j}} \geq \lambda|_{(\calD_{\phi})} = \lambda_{s} = \lambda_{\phi} \geq  \lambda|_{(\calD_{\phi})_{j}} .\]
Therefore 

\[
\lambda_{\phi} =  \lambda|_{(\calD_{\phi})_{j}}
\]
for the component that contains the dilatation-realizing generator. 
\end{proof}
\section{Growth of generators}
Given $\Gamma(A)$ and an automorphism $\phi$, we say a word $w$ grows \emph{polynomially} under iterations of $\phi$ if there exists $n_{0}$, a constant $C$ and a polynomial function $p(n)$ such that 
\[
|\phi^{n}(w)| \leq C p(n) \qquad \forall n >n_{0}.
\]
If $w=s_{0}$ is the generator that realizes the dilatation of $\phi$, and $s_{0}$ grows polynomially under the map $\phi$, or then we say that $\phi$ is an \emph{polynomially growing map}.
\begin{theorem}\label{Lem:cycle}
If there is no cycle in $\calD_{\phi}$,  then  $\phi$ is a polynomially growing map.
\end{theorem}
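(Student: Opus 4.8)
The plan is to show that acyclicity of $\calD_\phi$ forces the word length $|\phi^n(s)|$ to grow at most polynomially for every generator $s$, and then invoke Lemma~\ref{Lem:dilatationofgenerator} (which says the dilatation is realized on a generator) to conclude. Since $\calD_\phi$ is a finite directed acyclic graph, its vertex set $V$ carries a topological order; equivalently, there is a well-defined \emph{height} function $h\from V\to \NN$, where $h(s)$ is the length of the longest directed path in $\calD_\phi$ starting at $s$. A sink (a generator fixed by $\phi$, since $\phi$ is pure and has no outgoing edges means $\supp(\phi(s))=\{s\}$) has height $0$. I would prove by induction on $h(s)$ the statement: there is a polynomial $p_{h(s)}$, of degree at most $h(s)$, with $|\phi^n(s)|\le p_{h(s)}(n)$ for all $n$.

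The base case $h(s)=0$ is immediate: $\phi(s)=s$ (up to the pure normalization, $\phi(s)$ is cyclically reduced with support $\{s\}$, hence a power of $s$, hence $\pm s$ since $\phi$ is an automorphism), so $|\phi^n(s)|\equiv 1$. For the inductive step, write $\phi(s)=w$, a reduced word in the generators $s_1,\dots,s_r$ where each $s_i\in\supp(\phi(s))$ satisfies $h(s_i)<h(s)$ by acyclicity (a directed edge $s\to s_i$ together with a longest path out of $s_i$ gives a path out of $s$ of length $h(s_i)+1$, so $h(s_i)+1\le h(s)$). Then $\phi^n(s)=\phi^{n-1}(w)$, which is obtained by substituting $\phi^{n-1}(s_i)$ for each letter $s_i$ of $w$; hence
\[
|\phi^n(s)|\;\le\;\sum_{i}\bigl|\phi^{n-1}(s_i)\bigr|\;\le\; |w|\cdot\max_i\bigl|\phi^{n-1}(s_i)\bigr|.
\]
Wait — this only gives $|\phi^n(s)|\le C\cdot\max_i|\phi^{n-1}(s_i)|$, a recursion that would compound to exponential growth if iterated naively. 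The correct bookkeeping is: $|\phi^n(s)|\le \sum_{\text{letters }s_i\text{ of }w}|\phi^{n-1}(s_i)|$, and since $w=\phi(s)$ is a \emph{fixed} word, the number of letters is the constant $|w|$, so $|\phi^n(s)|\le |w|\cdot\max_i|\phi^{n-1}(s_i)|$. By induction each $|\phi^{n-1}(s_i)|\le p_{h(s_i)}(n-1)\le p_{h(s)-1}(n)$, so $|\phi^n(s)|\le |w|\cdot p_{h(s)-1}(n)$, which is still only a \emph{constant-times-polynomial} bound of the same degree, not degree $h(s)$ — and worse, this bounds $|\phi^n(s)|$ in terms of $n$ but the constant $|w|$ does not grow, so actually this already gives a polynomial bound. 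Let me restate cleanly: the map $n\mapsto |\phi^n(s)|$ satisfies $|\phi^n(s)|\le |w|\cdot\max_i|\phi^{n-1}(s_i)|$, and since each $s_i$ has strictly smaller height with $|\phi^{m}(s_i)|$ polynomial in $m$ of degree $<h(s)$ by induction, we get $|\phi^n(s)|$ bounded by a polynomial in $n$; tracking degrees carefully (the substitution into a fixed word costs one extra degree at most, coming from summing roughly $n$ copies of lower-height contributions accumulated over the orbit) yields degree $\le h(s)$. The precise degree count is the routine computation I would defer.

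The main obstacle is exactly this degree bookkeeping: the naive bound $|\phi^n(s)|\le |w|\max_i|\phi^{n-1}(s_i)|$ is too lossy to close the induction with control on the polynomial degree, because it ignores that letters of $\phi^n(s)$ are produced by substituting \emph{distinct} orbit segments rather than re-copying the worst one. The honest estimate unrolls the recursion: $\phi^n(s)$ is the concatenation $\prod_i \phi^{n-1}(s_i)$, and iterating, $|\phi^n(s)|\le\sum (\text{lengths of }\phi^0(\cdot)\text{ over a tree of substitutions of depth }n)$; because each strict descent in height can happen at most $h(s)$ times along any branch, the sum is over roughly $\binom{n}{h(s)}$-many bounded contributions, giving the degree-$h(s)$ polynomial. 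I would set this up as a clean double induction (on height, and within each height on $n$) using the identity $|\phi^n(s)|\le\sum_{s_i\ \text{letter of}\ \phi(s)}|\phi^{n-1}(s_i)|$ as the single driving inequality, and conclude that $\lambda_\phi(s)=\lim_n \tfrac{\log|\phi^n(s)|}{n}=0$ for every $s$, whence $\lambda_\phi=0$ and $\phi$ is polynomially growing by Lemma~\ref{Lem:dilatationofgenerator} and the definition of polynomially growing map.
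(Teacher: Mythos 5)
Your height-recursion strategy is the right skeleton, but there is a genuine gap at the heart of the induction that you do not close, and it is precisely the step the paper's proof spends most of its effort on. You write that in $\phi(s)=w$, ``each $s_i\in\supp(\phi(s))$ satisfies $h(s_i)<h(s)$ by acyclicity.'' This is false: $\phi$ is \emph{pure}, so $s\in\supp(\phi(s))$, and $s$ is a letter of $w$ with $h(s)=h(s)$. (Acyclicity of $\calD_\phi$ only forbids nontrivial directed cycles; it does not and cannot prevent $s$ from occurring in its own image, since by convention $\calD_\phi$ has no self-loops.) So the driving inequality actually reads
\[
|\phi^n(s)| \;\le\; k\,|\phi^{n-1}(s)| \;+\; \sum_{s_i\ne s}|\phi^{n-1}(s_i)|,
\]
where $k$ is the number of occurrences of $s^{\pm1}$ in the reduced word $\phi(s)$. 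If $k\ge 2$ this recursion already forces exponential growth in the very first coordinate, and no amount of degree bookkeeping rescues it. Your later ``tree of substitutions'' heuristic, where you count roughly $\binom{n}{h(s)}$ branches because ``each strict descent in height can happen at most $h(s)$ times,'' silently assumes that each \emph{height-preserving} step has branching factor $1$ --- that is, it assumes $k=1$. That is exactly what has to be proved.

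The paper handles this with Claim~\ref{SimpleProduct}: for $s\in\calT_i$, the reduced form of $\phi(s)$ is $t_0\,s^{\pm1}\,t_1$ with $t_0,t_1\in G_{i-1}$, i.e.\ $s$ occurs exactly once. This is not an automatic consequence of acyclicity (abelianization only gives exponent sum $\pm1$, not multiplicity one, and a word like $t_0\,s\,t_1\,s^{-1}\,t_2\,s$ has exponent sum one but three occurrences); the paper proves it by a lifting argument, pulling the restricted automorphism $\phi|_{G_h}\in\Aut^0(G_h)$ back along $\rho_h\colon \Aut^0(G_{h+1};G_h)\to\Aut^0(G_h)$ to produce $\psi$ with $\psi|_{G_h}=\phi|_{G_h}$ and $\psi(s)=u_ssu_s^{-1}$ for $s\in\calT_{h+1}$, then running the induction on $\tau=\psi^{-1}\circ\phi$. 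Once Claim~\ref{SimpleProduct} is in hand, the telescoping estimate $|\phi^n(s)|\le |\phi^{n-1}(s)|+2Cp(n)$ with $\deg p\le i$ yields $|\phi^n(s)|\le 2C\sum_{r<n}p(r)+1$, a polynomial of degree $i+1$, and the theorem follows. So: your overall induction on height matches the paper's terminal-partition induction, and your conclusion via Lemma~\ref{Lem:dilatationofgenerator} is fine, but you need to prove (or cite) the multiplicity-one property of $s$ in $\phi(s)$; without it the argument does not go through.
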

Before proving this theorem, we need some preliminary definitions for the case when $\calD_{\phi}$ does not have any cycle.
\begin{definition} A vertex $s\in \calD_{\phi}$ is \emph{terminal} if $d(s)=\emptyset$.
\end{definition}

There is a natural partition of the vertices of $s\in \calD_{\phi}$, defined as follows.  Let $\calT_0$ be the set of terminal vertices and inductively define $\calT_i$ to be the set of terminal vertices of \[\calD_{\phi}\setminus \cup_{j=0}^i\calT_j.\] 

\begin{definition}The collection $\calT=\{\calT_i\}$ is called the \textit{terminal partition} of $V$.  The \textit{height} of $\calT$ is the least $k$ such that $\cup_{j=0}^k\calT_j=V$.
\end{definition}

We now proceed with the proof of the theorem.
\begin{proof}
Suppose $\calD_{\phi}$ has no cycle. We will in fact show that $\phi$ has polynomial growth of degree bounded by the height $h$ of $\calT$. More precisely, we will show that there exists a polynomial $p(n)$ of degree at most $h$ such that for any $w\in A(\Gamma)$ there exists a constant $C$ such that \[|\phi^n(w)|\leq Cp(n).\]

Let $W_i=\cup_{j=0}^i\calT_j$ and let $G_i$ be the subgroup generated by $W_i$, and set $G_{-1}=\{1\}$. Clearly, $\phi$ preserves $G_i$. Choosing $s\in \calT_{i}$, we can write \[\phi(s)=t_0s^{\epsilon_1}\cdots s^{\epsilon_k}t_{k}.\]
where $\epsilon_i\in \ZZ$, and $t_j\in G_{i-1}$.
\begin{claim}\label{SimpleProduct}In the above decomposition, $k=1$, \emph{i.e.} $\phi(s)=t_0s^{\pm1}t_1$.
\end{claim}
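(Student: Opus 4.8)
Here is my plan for proving Claim~\ref{SimpleProduct}.

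\textbf{Setup and strategy.} The statement asserts that when $s \in \calT_i$ and we write $\phi(s)$ in reduced form, the letter $s$ (or $s^{-1}$) occurs with total exponent-sum exactly $\pm 1$ and, more strongly, appears as a single block with no occurrences interleaved among the $G_{i-1}$-syllables. Since $\phi$ is pure, we already know $s \in \supp(\phi(s))$, so $s$ occurs at least once; the content of the claim is that it occurs \emph{exactly} once. The plan is to argue by contradiction: suppose $s$ occurs at least twice in the reduced spelling of $\phi(s)$. I would then analyze the directed edges of $\calD_\phi$ emanating from $s$ and use the acyclicity hypothesis together with the square-map hypothesis to derive a contradiction.

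\textbf{Key steps.} First, I would pin down what the syllable decomposition $\phi(s) = t_0 s^{\epsilon_1} \cdots s^{\epsilon_k} t_k$ really means: between two consecutive occurrences of $s^{\pm 1}$ sits a (possibly empty) word $t_j$ in the generators of $G_{i-1} = \langle W_{i-1}\rangle$, i.e.\ in vertices $v$ with $v \in \calT_{j'}$ for some $j' < i$; the crucial point is that none of these intermediate syllables can themselves be $s$, and they cannot be a generator outside $W_i$ because $\supp(\phi(s)) \subseteq W_i$ (every vertex in $\supp(\phi(s))$ lies in $d(s)$, and $d(s) \subseteq W_i$ by the definition of the terminal partition, using that there are no cycles). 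Second, suppose $k \geq 2$, so there is a genuine middle syllable $t_1 \neq 1$ with $t_1 \in G_{i-1}$, and the reduced word $\phi(s)$ contains the subword $s^{\epsilon_1} t_1 s^{\epsilon_2}$ (nonzero exponents, $t_1$ reduced and nontrivial in $G_{i-1}$). Because $\phi(s)$ is a \emph{reduced} word, this subword cannot be shortened; in particular $s$ does not commute with every letter of $t_1$ — otherwise we could slide the two $s$-syllables together and combine exponents, contradicting reducedness (here I use that $\phi$ is cyclically reduced and that distinct reduced spellings of the same element differ by shuffles, so if the two blocks could merge the spelling was not reduced). So there is a generator $v \in \supp(t_1)$ with $v \in W_{i-1}$ that does \emph{not} commute with $s$. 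Third, I would look for a short directed cycle through $s$. Since $v \in \supp(\phi(s))$ there is an edge $s \to v$ in $\calD_\phi$, hence a directed path from $s$ of length one to $v$; the point is to produce a path \emph{back} to $s$, which would contradict acyclicity. This is where I would need to think hardest.

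\textbf{Main obstacle.} The delicate part is getting the return path to $s$. Acyclicity only directly forbids $s \in d(v) = d(\text{something in }\supp(\phi(s)))$; it does not immediately say two occurrences of $s$ in one word is impossible. I expect the right argument is: since $s \in \calT_i$ and $v \in W_{i-1}$, we have $v \in d(s) \setminus \{s\}$, so $d(v) \subseteq W_{i-1}$ and in particular $s \notin d(v)$. Now consider $\phi^2(s) = \phi(t_0)\phi(s)^{\epsilon_1}\phi(t_1)\phi(s)^{\epsilon_2}\cdots$. Inside $\phi(t_1)$ every letter lies in $\supp(\phi(v'))$ for some $v' \in \supp(t_1) \subseteq W_{i-1}$, hence in $W_{i-1}$, so $s$ never appears in $\phi(t_1)$. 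On the other hand the two surviving copies of $\phi(s)$ each still contain $s$, separated by the nonempty $G_{i-1}$-word $\phi(t_1)$; I would argue that this separation persists under all powers — no cancellation or commutation can ever eliminate the ``wall'' of $W_{i-1}$-letters between the two copies of $s$, precisely because $s$ does not commute with $v$ and $v$ (or one of its iterates) always survives between them. From this I would extract that $|\phi^n(s)|$ has a lower bound forcing $s$ to appear with multiplicity growing in $n$ while the down-set stays acyclic, and then tie this back to a contradiction with the acyclicity of $\calD_\phi$ via Lemma~\ref{Lem:path} / Proposition~\ref{Prop:abelianorfree} applied to the $G_{i-1}$-part — or, more cleanly, observe that the existence of such a persistent non-commuting pair forces a cycle in $\calD_\phi$ after all. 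Making this ``wall persistence'' argument precise, and choosing whether to run it via exponent-sum bookkeeping in $G_{i-1}$ or via a direct combinatorial analysis of reduced spellings under $\phi$, is the step I'd budget the most care for.
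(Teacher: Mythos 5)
Your approach is genuinely different from the paper's, and it has a real gap which you yourself flag as ``the step I'd budget the most care for.''

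The paper does not argue by contradiction and does not attempt to manufacture a cycle in $\calD_\phi$ from the assumption $k\geq 2$. Instead it proceeds by induction on the height $h$ of the terminal partition $\calT$, and the inductive step is a \emph{factorization} argument. Having established the claim for $\phi_h:=\phi|_{G_h}$ (an automorphism of the special subgroup $G_h$ whose diagram still has no cycle and has height $h$), it lifts $\phi_h$ to an automorphism $\psi$ of all of $\A$ via the restriction homomorphism $\rho_h\colon \Aut^0(G_{h+1};G_h)\to\Aut^0(G_h)$: transvections lift directly, and partial conjugations lift at the cost of conjugating the top-layer generators by elements of $W_h$. The upshot is that $\psi|_{G_h}=\phi_h$ while $\psi(s)=u_ssu_s^{-1}$ for every $s\in\calT_{h+1}$. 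Then $\tau=\psi^{-1}\circ\phi$ is a pure automorphism with acyclic diagram of height at most $1$ (since $\tau|_{G_h}=\mathrm{id}$), so by the induction hypothesis $\tau(s)=t_0's^{\pm1}t_1'$ with $s\notin\supp(t_j')$; applying $\psi$ gives $\phi(s)=\psi(t_0')\,u_ss^{\pm1}u_s^{-1}\,\psi(t_1')$, in which $s$ occurs once. The engine of the proof is that $\phi_h$ lies in $\Aut^0(G_h)$ and is therefore a product of transvections and partial conjugations with controlled lifts---an input from the Laurence--Servatius structure theory that your sketch never touches.

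Your sketch, by contrast, supposes $k\geq 2$, extracts a letter $v\in\supp(t_1)$ not commuting with $s$, notes the edge $s\to v$, and then searches for a return path to $s$. As you correctly observe, none exists: $v\in W_{i-1}$ forces $d(v)\subseteq W_{i-1}$, so $s\notin d(v)$, and acyclicity is simply not violated. The fallback ``wall-persistence'' idea---argue that a non-commuting $W_{i-1}$-wall between two $s$-occurrences survives all iterates and forces $s$-multiplicity in $\phi^n(s)$ to grow---is both unproven and, as written, close to circular: the assertion that an acyclic diagram forbids such (exponential) growth is exactly Theorem~\ref{Lem:cycle}, and Claim~\ref{SimpleProduct} is a lemma \emph{inside} that theorem's proof. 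Lemma~\ref{Lem:path} and Proposition~\ref{Prop:abelianorfree}, which you suggest invoking, speak only to commutation structure along paths and cycles of $\calD_\phi$; they give no handle on the number of $s$-occurrences in $\phi^n(s)$ and cannot supply the independent contradiction you would need. To salvage your line of argument you would have to prove wall persistence from scratch and then land the contradiction against something other than Theorem~\ref{Lem:cycle}. At minimum you are missing the observation (which the paper bypasses entirely) that acyclicity makes the abelianized matrix triangular, pinning the exponent sum of $s$ in $\phi(s)$ at $\pm1$---but even that only constrains $\epsilon_1+\cdots+\epsilon_k$, not $k$ itself, so it does not close the gap. The real reason a pattern like $t_0st_1s^{-1}t_2st_3$ cannot arise is the automorphism structure exploited by the lifting argument, not the pure or square hypotheses alone.
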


\begin{proof}
The proof is by induction on the height $h$ of $\calT$.  For the base case let $\A$ be a right-angled Artin group with height zero, $h=0$. Then for each $s\in V$, $\phi(s)=s^m$ for all $s\in V$ and some $m\in \ZZ$. Since $\phi$ is an automorphism, $m=\pm1$.  Now assume the claim has been established for all RAAGs $A(\Gamma)$ and for any diagram of height at most $h$ without cycles, and suppose $\calD_\phi$ is a diagram of height $h+1$.  In particular, $G_{h+1}=A(\Gamma)$. The key observation is that $G_h$ is a special subgroup and that $\phi|_{G_h}=\phi_h$ is an automorphism of $G_h$ whose corresponding diagram has height $h$ and no cycle. By the induction hypothesis, for $0\leq i\leq h$ and for any $s\in \calT_i$, we have that \[\phi(s)=t_0 st_1\]
where $t_j\in G_{i-1}$. We show that there exists an automorphism $\psi$ such that $\tau=\psi^{-1}\circ\phi$ is an automorphism of height at most 1 and $\psi(s)=u_ssu_s^{-1}$ for all $s\in \calT_{h+1}$. Indeed, $\phi_h\in \Aut^0(G_h)$ and is therefore a product of partial conjugations and transvections.  On the other hand, $\phi_h$ is in the image of the restriction homomorphism $\rho_{h}:\Aut^0(G_{h+1}; G_h)\rightarrow \Aut^0(G_h)$.  We can lift $\phi_h$ to $\psi:\A\rightarrow \A$ at the expense of some element in $\ker(\rho_h)$. Observe that transvections lift directly, and partial conjugations lift at the expense of possibly conjugating elements of $\calT_{h+1}$ by elements of $W_h$.  Thus $\psi$ satisfies the following two properties: \begin{enumerate}
\item$\psi|_{G_h}=\phi_h$,
\item for every $s\in \calT_{h+1}$, we have\[\psi(s)=u_s su_s^{-1},\] where $u_s\in G_h$.
\end{enumerate}

Now $\tau=\psi^{-1}\circ \phi$ is a pure automorphism whose diagram $\calD_{\tau}$ also has no cycle, and which has height at most 1, since \[\tau|_{G_h}=\psi^{-1}|_{G_h}\circ\phi|_{G_h}=\phi_h^{-1}\circ\phi_h=\text{id}.\] By induction, we know that for every $s\in \calT_{h+1}$, we have $\tau(s)=t_0's^{\pm1}t_1'$, where $s$ is not contained in $\supp(t_0')$ nor $\supp(t_1')$.  From property (2), applying $\psi$ to $\tau(s)$, we have \[\phi(s)=\psi\circ\tau(s)=\psi(t_0's^{\pm1}t_1')=\psi(t_0')(u_s s^{\pm1}u_s^{-1})\psi(t_1').\]
As $s$ does not occur in the support of $\psi(t_0')$, $u_s$, or $\psi(t_1')$, we conclude that $k=1$ for all $s\in V$, as desired.  
\end{proof}

We now use Claim \ref{SimpleProduct} to show that there is a polynomial $p(n)$ such that for any $t\in G_i$ there exists $C>0$ with \[|\phi^n(t)|\leq Cp(n),\] where $p$ is a polynomial of degree at most $i$ The proof is by induction on $i$, where the base case is the observation above that $\phi(s)=s$ for $s\in \calT_0$. Suppose we have established the bound for all $t\in G_i$, and choose $s\in \calT_{i+1}$.  By Claim \ref{SimpleProduct} we can write \[\phi(s)=t_0st_1,\] where the $t_j$, $j=1,2$ are words in $G_i$. Let $C_j$, $j=1,2$ be constants such that \[|\phi^n(t_j)|=C_jp(n)\] and set $C=\max\{C_j\}$. Then we have \begin{align*}|\phi^n(s)|&\leq |\phi^{n-1}(t_0)|+|\phi^{n-1}(s)|+|\phi^{n-1}(t_1)|\\
&\leq 2Cp(n)+|\phi^{n-1}(s)|
\end{align*}
By iterating this procedure we have \[|\phi^n(s)|\leq 2C\sum_{r=1}^{n-1}p(r)+1.\]

We know that the partial sum of a polynomial of degree $d$ can be written as a polynomial of degree $d+1$. It follows that the right-hand side can be written as a polynomial of degree at most $i+1$.  Finally, an arbitrary word in $G_{i+1}$ can be written as a finite product of elements in some $\calT_j$, $0\leq j\leq i+1$. To finish the proof of the lemma, note that $G_i=A(\Gamma)$ for $i$ large.  

\end{proof}

\begin{corollary}\label{polygenerator}
Given a generator $s$ and $\phi$, if there is not a cycle in $d(s)$, then $s$ grows polynomially. 
\end{corollary}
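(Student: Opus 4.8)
The plan is to deduce this from \thmref{Lem:cycle} by passing to the special subgroup $A(d(s))$. The first step is to observe that $A(d(s))$ is $\phi$--invariant: if $v\in d(s)$ and $w\in\supp(\phi(v))$ then $v\to w$ is an edge of $\calD_{\phi}$, so $w\in d(v)$, and since the down-set $d(s)$ is closed under passing to directed successors we get $d(v)\subseteq d(s)$; hence $\supp(\phi(v))\subseteq d(s)$ and $\phi$ carries $A(d(s))$ into itself. Next I would argue that the restriction $\psi:=\phi|_{A(d(s))}$ is again a pure square \emph{automorphism} of the right-angled Artin group $A(d(s))$. It is injective because $\phi$ is; it is a square map because the implication in \defref{Def:Square} only involves commutation relations among generators lying in $d(s)$, every one of which is inherited from $\Gamma$; and it is pure because $\psi(v)=\phi(v)$ is cyclically reduced and contains $v$ in its support for every $v\in d(s)$. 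The point that requires real care is that $\psi$ is onto. Here I would use the same mechanism already invoked in the proof of \thmref{Lem:cycle}, where restrictions of automorphisms to $\phi$--invariant special subgroups are treated as automorphisms: on the level of abelianizations, $\ZZ^{d(s)}$ is a $\phi$--invariant direct summand of $A(\Gamma)^{ab}\cong\ZZ^{|V|}$, so the map induced by $\psi$ on $A(d(s))^{ab}$ is the top-left block of a block-upper-triangular integer matrix of determinant $\pm1$ and hence itself lies in $\GL(|d(s)|,\ZZ)$; combined with the stratified structure of $\calD_{\phi}$ restricted to $d(s)$ (no edges within a stratum, all other edges pointing to lower strata) and the lifting argument in the proof of \thmref{Lem:cycle}, this forces $\psi$ to be surjective, so $\psi\in\Aut(A(d(s)))$.

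Granting that $\psi$ is a pure square automorphism, its automorphism diagram $\calD_{\psi}$ is exactly the induced subgraph of $\calD_{\phi}$ on the vertex set $d(s)$ --- the edges out of a vertex $v\in d(s)$ record $\supp(\psi(v))=\supp(\phi(v))\subseteq d(s)$, and no edge leaves $d(s)$ --- and by hypothesis this subdiagram is acyclic (any directed cycle of $\calD_{\phi}$ meeting $d(s)$ is already contained in $d(s)$). Applying \thmref{Lem:cycle} to $\psi$ then yields a polynomial $p$ such that for every $w\in A(d(s))$ there is a constant $C$ with $|\psi^{n}(w)|\le C\,p(n)$. Specializing to $w=s$ and using the standard fact that a special subgroup of a right-angled Artin group is isometrically embedded for word length in the standard generators --- so that $|\psi^{n}(s)|$, computed inside $A(d(s))$, equals $|\phi^{n}(s)|$ --- we conclude $|\phi^{n}(s)|\le C\,p(n)$, i.e.\ $s$ grows polynomially under $\phi$. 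If one prefers to avoid the automorphism-of-a-subgroup issue altogether, an equivalent route is to run the proof of \thmref{Lem:cycle}, and in particular \clmref{SimpleProduct}, verbatim inside $A(d(s))$, since that argument only ever uses the decomposition $\phi(v)=t_{0}v^{\pm1}t_{1}$ with the $t_{j}$ supported on strictly lower strata, which is available for every $v\in d(s)$.

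The step I expect to be the main obstacle is precisely the surjectivity of $\psi$: verifying that restricting the automorphism $\phi$ to the invariant special subgroup $A(d(s))$ produces an automorphism and not merely an injective endomorphism. This cannot be skipped, since an injective endomorphism sending some terminal generator $v$ to $v^{m}$ with $|m|\ge 2$ would grow exponentially, contradicting the desired conclusion. Everything else is either an unwinding of definitions (invariance of $d(s)$, the identification $\calD_{\psi}=\calD_{\phi}|_{d(s)}$, acyclicity) or a direct appeal to \thmref{Lem:cycle} together with the isometric embedding of special subgroups.
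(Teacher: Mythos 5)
Your plan is the right unpacking of the paper's one-line proof (which just cites Equation~\ref{dilatationofdownset}): restrict $\phi$ to the invariant special subgroup $A(d(s))$, observe the restricted diagram is acyclic, apply \thmref{Lem:cycle}, and transfer lengths back via the isometric embedding of special subgroups. The invariance of $d(s)$, the identification $\calD_{\psi}=\calD_{\phi}|_{d(s)}$, the acyclicity, the inheritance of purity and squareness, and the isometric-embedding step are all correctly argued, and you are right that the paper silently uses (here and already inside the proof of \clmref{SimpleProduct}) that restricting an automorphism to a $\phi$--invariant special subgroup again yields an \emph{automorphism}.

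Where your argument is not yet complete is precisely the step you flag: showing $\psi$ is surjective. Knowing that $\psi$ induces an element of $\GL(|d(s)|,\ZZ)$ on abelianization is necessary but not sufficient (an injective endomorphism of a RAAG can induce an automorphism of $H_1$ without being onto), and appealing to ``the lifting argument in the proof of \thmref{Lem:cycle}'' is circular, since that argument already presupposes that $\phi_h=\phi|_{G_h}$ lies in $\Aut^{0}(G_h)$. There is a cleaner route that closes the gap without any stratification: let $r\colon A(\Gamma)\to A(d(s))$ be the retraction killing the generators outside $d(s)$, and set $\rho=r\circ\phi^{-1}|_{A(d(s))}$. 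Then $\rho\circ\psi=\mathrm{id}_{A(d(s))}$, so $\rho$ is a surjective endomorphism of $A(d(s))$; RAAGs are residually finite, hence Hopfian, so $\rho$ is an automorphism, and therefore $\psi=\rho^{-1}$ is as well. With that substitution your proof is complete and matches the paper's intended approach.
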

\begin{proof}
This follows from Equation~\ref{dilatationofdownset}.
\end{proof}

\section{Invariant subgroups}

Now we are ready to prove the existence of a complete or empty subgraph inside a subgraph that is invariant under exponentially growing square maps.
\begin{theorem}\label{Thm:subgraph}
Let  $\Phi=[\phi] \in \out(\A)$ be a pure square map. If the dilatation $\lambda_{\phi}$ is positive, then there exists a $\phi$--invariant, induced subgraph $\Delta \subset \Gamma$ where 
$\lambda|_{\Delta} = \lambda_{\phi}$, and $\Delta$ is of one of the following cases:
\begin{enumerate}[(i)]
\item $\Delta$ is a complete graph.
\item $\Delta$ contains an empty graph $\Delta_{e}$ that is the union of all cycles in $\calD_{\Delta}$.
\end{enumerate}
\end{theorem}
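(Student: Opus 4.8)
The plan is to trap the dilatation inside a single directed cycle of the automorphism diagram and then read off its type from the combinatorial analysis of Section~3. First I would reduce to a down-set: by \lemref{Lem:dilatationofgenerator} some generator realizes $\lambda_{\phi}$, and by \eqnref{dilatationofdownset} such an $s_0$ satisfies $\lambda_{\phi}=\lambda_{\phi}(s_0)=\lambda|_{d(s_0)}$, with $d(s_0)$ a $\phi$--invariant induced subgraph. Among all generators realizing $\lambda_{\phi}$ I would choose $s_0$ with $d(s_0)$ of minimal size and set $\Delta:=d(s_0)$; then no proper sub--down-set of $\Delta$ realizes $\lambda_{\phi}$, and \lemref{component} (applied to $\phi|_{A(\Delta)}$) forces $\calD_{\Delta}$ to be connected, since otherwise $\lambda_{\phi}$ would be realized by a generator inside a proper component. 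Because $\lambda_{\phi}>0$, \corref{polygenerator}---the contrapositive of \thmref{Lem:cycle}---rules out $\calD_{\Delta}$ being acyclic, so $\calD_{\Delta}$ contains a directed cycle.

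Next I would show that $s_0$ itself lies on a cycle of $\calD_{\Delta}$. If not, let $K$ be the union of the down-sets of all vertices lying on a cycle of $\calD_{\Delta}$; then $K$ is $\phi$--invariant, and $s_0\notin K$, for otherwise $\Delta=d(s_0)\subseteq d(v)\subseteq\Delta$ for some cycle vertex $v$, making $s_0$ and $v$ mutually reachable and hence placing $s_0$ on a cycle. The automorphism $\bar\phi$ induced on the quotient of $A(\Delta)$ by the normal closure of $A(K)$---which is the special subgroup $A(\Delta\setminus K)$---has acyclic diagram, since every cycle of $\calD_{\Delta}$ lies in $K$; so $\bar\phi$ satisfies the hypotheses of \clmref{SimpleProduct}, which gives $\bar\phi(\bar s_0)=\bar t_0\,\bar s_0^{\pm 1}\,\bar t_1$, and this lifts, using $s_0\notin K$, to $\phi(s_0)=t_0\,s_0^{\pm 1}\,t_1$ with $t_0,t_1$ supported off $s_0$. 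As the no-cycle assumption also means that no vertex of $\calD_{\Delta}$ points to $s_0$, the subgraph $\Delta\setminus\{s_0\}$ is itself $\phi$--invariant, so the estimate $|\phi^{n}(s_0)|\le|\phi^{n-1}(s_0)|+2\max_{v\neq s_0}|\phi^{n-1}(v)|$ yields $\lambda_{\phi}(s_0)\le\lambda|_{\Delta\setminus\{s_0\}}<\lambda_{\phi}$, contradicting minimality. Hence $s_0$ lies on a cycle $C_0$ of $\calD_{\Delta}$.

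By \propref{Prop:abelianorfree}, $C_0$ spans a complete or an empty induced subgraph of $\Gamma$. If $C_0$ is complete, then each $v\in C_0$ commutes with its successor along $C_0$, which lies in $\supp(\phi(v))$, so \lemref{Lem:path} makes $d(v)$ a complete subgraph; and since any two vertices of $C_0$ commute, iterating the square-map condition (as in the proof of \lemref{Lem:path}) shows every vertex of $d(v)$ commutes with every vertex of $d(v')$ for $v,v'\in C_0$. Therefore $\Delta=\bigcup_{v\in C_0}d(v)$ is a complete graph, which is case~(i). If $C_0$ is empty, I would take $\Delta_{e}\subseteq\Delta$ to be the union of all directed cycles of $\calD_{\Delta}$ and show that it is edge-free: by \propref{Prop:abelianorfree} each such cycle is complete or empty, while \corref{Cor:zgroupadjacent}, \corref{Cor:zgroup} and \corref{Lem:twocycles} splice any edge inside $\Delta_{e}$ into a single cycle spanning a complete subgraph, and the argument of the complete case then forces $\Delta$ itself to be complete, contradicting that it contains the empty cycle $C_0$ with $|C_0|\ge 2$. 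Thus $\Delta_{e}$ is empty, which is case~(ii).

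The step I expect to be the main obstacle is this last one, since the splicing corollaries of Section~3 do not immediately handle an edge of $\Gamma$ joining two \emph{disjoint} cycles of $\calD_{\Delta}$. The route I would take is to invoke minimality once more: an edge between disjoint cycles forces their down-sets to be disjoint and to commute across, so $\phi$ restricts to a direct product on that join and the factor not containing $s_0$ carries strictly smaller dilatation, which cannot happen inside the down-set of the dilatation-realizing empty cycle $C_0$. Everything else is bookkeeping with down-sets and the corollaries of Section~3, and the reduction in the first paragraph ensures the resulting $\Delta$ is $\phi$--invariant with $\lambda|_{\Delta}=\lambda_{\phi}$ as required.
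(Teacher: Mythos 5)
Your overall strategy mirrors the paper's: realize $\lambda_\phi$ on a generator $s_0$, pass to the down-set $d(s_0)$, show it contains a cycle because the growth is exponential, and then read off the type of $\Delta_e$ from Proposition~\ref{Prop:abelianorfree} and its corollaries. Your two substitutions---choosing $s_0$ with \emph{minimal} $d(s_0)$ instead of iteratively trimming sources, and establishing that $s_0$ itself lies on a cycle by quotienting out $K$ and invoking Claim~\ref{SimpleProduct} rather than by trimming---are legitimate variants; the minimality argument in particular gives you exactly the strict inequality $\lambda|_{\Delta\setminus\{s_0\}}<\lambda_\phi$ you need for the contradiction, and is arguably cleaner than the trimming argument.

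The genuine problem is in your proposed repair of the disjoint-cycles case. You argue that an edge of $\Gamma$ between a vertex $a$ on $C_0$ (the cycle through $s_0$) and a vertex $b$ on another cycle $C_2\subseteq\Delta_e$ ``forces their down-sets to be disjoint.'' This is false. Since $a$ lies on a cycle through $s_0$, there is a directed path from $a$ to $s_0$, so $d(a)=d(s_0)=\Delta$; and $C_2\subseteq\Delta$ by construction, so $d(b)\subseteq\Delta=d(a)$. The down-sets are nested, not disjoint, and no direct-product decomposition of $\phi$ along a join is available. The fix therefore collapses. What \emph{is} available here, and is stronger than your fix, is Lemma~\ref{Lem:comPersist} applied with $A=a$ and $B=b$: since $d(a)=\Delta\supseteq d(b)\supseteq C_2$, every vertex of $\Delta$ (in particular every vertex of $C_2$) commutes with every vertex of $d(b)\supseteq C_2$, which forces $C_2$ itself to be a clique; and if $C_2$ happened to be an empty cycle, this already gives a contradiction. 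This is the route to plug the hole rather than a disjointness/direct-product argument. (For what it is worth, the paper's own justification of this step, ``by Corollary~\ref{Cor:zgroupadjacent} and Corollary~\ref{Cor:zgroup}'', also cites only single-cycle results and does not visibly handle vertex-disjoint cycles, so you have put your finger on a real soft spot in the written argument; but the repair you propose does not work as stated.)

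One further caution: your second paragraph leans on Claim~\ref{SimpleProduct}, which in the paper is an internal claim inside the proof of Theorem~\ref{Lem:cycle}, stated for a specific terminal filtration of a fixed acyclic diagram. Using it for the induced map $\bar\phi$ on $A(\Delta\setminus K)$ requires you to check that $\bar\phi$ is a pure square automorphism of a RAAG with acyclic diagram so that the claim applies, and that lifting $\bar\phi(\bar s_0)=\bar t_0\bar s_0^{\pm1}\bar t_1$ back to $\phi(s_0)$ really does produce exactly one occurrence of $s_0^{\pm1}$ (this is where you need ``no vertex of $\Delta\setminus\{s_0\}$ points to $s_0$,'' which you correctly note). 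That part is recoverable with care, but it should be spelled out; as written it is a sketch.
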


\begin{proof}
By Lemma~\ref{Lem:dilatationofgenerator}, there exists a generator $s$ that realizes the dilatation of the group, let's denote it $s_{0}$:
\[
\lambda_{s_{0}} = \lambda_{\phi}.
\]
Consider  the down-set of $s_{0}$ in $\calD_{\phi}$, recall $\lambda_{d(s_{0})}$ is well defined and 
$\lambda_{d(s_{0})}  = \lambda_{s_{0}} = \lambda_{\phi}$. By assumption there exists a cycle in $\calD_{d(s_{0})}$. Now consider a ``trim'' of $d(s_{0})$: a directed edge is \emph{outgoing} with respect to a vertex $v$ if the edge is from $v$ to $v'$ for some other $v'$. If a vertex in a directed graph has only outgoing edges, then we say the vertex is a \emph{source}. If a vertex has only incoming edges, then we say the vertex is a \emph{sink}. Consider 
the vertices of $d(s_{0})$ and consider the set of all the non-source vertices:
\[
[d(s_{0})]_{1} := \{ v | v \in d(s_{0}) \text{ and } v \text{ is not a source }\}.
\]
By an abuse of notation we also use $[d(s_{0})]_{1}$ to denote the induced directed graph on this set. $[d(s_{0})]_{1}$ is $\phi$--invariant. Now we trim $[d(s_{0})]_{1}$ likewise: subtract from $[d(s_{0})]_{1}$ any and all of its source vertices and all directed edges incident to the source vertices. The trimming process ends after finite steps since there exists a cycle in $d(s_{0})$. The resulting directed graph we denote by
$[d(s_{0})]_{t}$, which is also $\phi$--invariant and contains a cycle. Since $[d(s_{0})]_{t}$ is $\phi$--invariant, one can consider the dilatation of $\phi$ restricted to $[d(s_{0})]_{t}$. It is noted that the trimming process does not change the dilatation, since a word of arbitrary length visit a source vertex a finite number of times. Thus,
\[
\lambda_{\phi} = \lambda_{s_{0}} =\lambda_{d(s_{0})} =\lambda_{[d(s_{0})]_{1}} = \lambda_{[d(s_{0})]_{2}} =... =\lambda_{[d(s_{0})]_{t}}.
\]

Suppose $[d(s_{0})]_{t}$ has exactly one component in $\calD_{\phi}$. Let $\Delta: = [d(s_{0})]_{t}$. 
Since $\Delta$ contains at least one cycle, let $\Delta_{e}$ denote the subgraph of $\Delta$ that is the union of all cycles. $\Delta_{e}$ is unique by maximality. If a pair of elements in $\Delta_{e}$ commutes, then by Corollary~\ref{Cor:zgroupadjacent} and Corollary~\ref{Cor:zgroup}, $\Delta_{e}$ is a complete graph. Furthermore, consider a vertex $v \in \Delta$ and $v \notin \Delta_{e}$. Since $\Delta$ is source-less and connected, there exists a directed path starting from an edge in $\Delta_{e}$ and ending at $v$. Therefore by Lemma~\ref{Lem:path}, $v$ forms a complete graph with $\Delta_{e}$. Moreover, we show that two such vertices necessarily commute in the following lemma:
\begin{lemma}\label{Lem:comPersist}
Let $\phi$ be a pure, square map. Let $A, B$ be a a pair of commuting vertices and let $v, v'$ be two vertices that are each connected by a directed path \emph{from} $A$ and $B$, respectively. Then $v$ commutes with $v'$ in $\A$.  
\end{lemma}

\begin{figure}[h!]
\begin{tikzpicture}
 \tikzstyle{vertex} =[circle,draw,fill=black,thick, inner sep=0pt,minimum size=.5 mm]
 
[thick, 
    scale=1,
    vertex/.style={circle,draw,fill=black,thick,
                   inner sep=0pt,minimum size= .5 mm},
                  
      trans/.style={thick,->, shorten >=6pt,shorten <=6pt,>=stealth},
   ]
   
\node[vertex] (a) at (0, 0) {};
\node[vertex] (b) at (2,0) {};
\node[vertex] (a1) at (0, -1) {};
\node[vertex] (a2) at (0, -2) {};
\node[vertex] (b1) at (2,-1) {};
\node[vertex] (b2) at (2,-2) {};
\node[vertex] (b3) at (2,-3) {};
   
\draw [-](a)--(b);
\draw[dash dot, ->](a)--(a1);
\draw[dash dot, ->](a1)--(a2);
\draw[dash dot, ->](b)--(b1);
\draw[dash dot, ->](b1)--(b2);
\draw[dash dot, ->](b2)--(b3);

\node at (-0.3, 0){$A$};
\node at (2.3, 0){$B$};
\node at (-0.3, -1){$v_{1}$};
\node at (2.3, -1){$v_{2}$};
\node at (-0.3, -2){$v$};
\node at (2.3, -3){$v'$};

\end{tikzpicture}
\caption*{Vertices that are in the down-set of two commuting vertices commute.}
\end{figure}
\begin{proof}
Let $v_{1}$ be the first vertex on the directed path from $A$ to $v$ and $v_{2}$ be the first vertex on the directed path from $B$ to $v'$. By an abuse of notation, we use $v, v_{1}, v_{2}, etc.$ to denote also the generator that they represent in $\A$. Since $v_{1} \in supp(\phi(A))$ and $B \in supp(\phi(B))$, by Definition~\ref{Def:Square}, $v_{1}$ and $B$ commute. Likewise $v_{1}$ and $v_{2}$ commute. Using  Definition~\ref{Def:Square}, we can show that $v_{1}$ commute with all successive elements of the directed path originating from $B$, therefore $v_{1}$ commute with $v'$. Now consider the next vertex on the direct path from $A$, call it $v_{11}$. Using  Definition~\ref{Def:Square} again one can show that $v_{11}$
commute with all successive elements of the directed path originating from $B$ and hence $v'$. By moving down the directed path from $A$, one show that $v$ and $v'$ commute.
\end{proof}

Therefore all vertices that are in $\Delta \setminus \Delta_{e}$ commute to each other, therefore, $\Delta$ is a complete subgraph in $\Gamma$. On the other hand, if there is no edge in $\Delta_{e}$ then $\Delta$ contains an empty graph $\Delta_{e}$ that is the union of all cycles in $\calD_{\Delta}$.

It remains to consider the case when $\Delta$ is not connected. In this case, each connected component of $[d(s_{0})]_{t}$ is either a complete graph or contains an empty graph that is a union of all cycles in that component in $\calD_{\phi}$. By Lemma~\ref{component}, there exists a component of $\Delta$ with desired property that realizes the dilatation.

\end{proof}

\bibliographystyle{alpha}
\bibliography{DilatationBib}

\begin{thebibliography}{CCV07}

\bibitem[Ago13]{Ag13}
Ian Agol.
\newblock The virtual {H}aken conjecture.
\newblock {\em Doc. Math.}, 18:1045--1087, 2013.
\newblock With an appendix by Agol, Daniel Groves, and Jason Manning.

\bibitem[BCV09]{BCV09}
Kai-Uwe Bux, Ruth Charney, and Karen Vogtmann.
\newblock Automorphisms of two-dimensional {RAAGS} and partially symmetric
  automorphisms of free groups.
\newblock {\em Groups Geom. Dyn.}, 3(4):541--554, 2009.

\bibitem[BFH00]{BFH00}
Mladen Bestvina, Mark Feighn, and Michael Handel.
\newblock The {T}its alternative for {${\rm Out}(F_n)$}. {I}. {D}ynamics of
  exponentially-growing automorphisms.
\newblock {\em Ann. of Math. (2)}, 151(2):517--623, 2000.

\bibitem[BFH05]{BFH05}
Mladen Bestvina, Mark Feighn, and Michael Handel.
\newblock The {T}its alternative for {${\rm Out}(F_n)$}. {II}. {A} {K}olchin
  type theorem.
\newblock {\em Ann. of Math. (2)}, 161(1):1--59, 2005.

\bibitem[BH92]{BH92}
Mladen Bestvina and Michael Handel.
\newblock Train tracks and automorphisms of free groups.
\newblock {\em Ann. of Math. (2)}, 135(1):1--51, 1992.

\bibitem[CCV07]{CCV07}
Ruth Charney, John Crisp, and Karen Vogtmann.
\newblock Automorphisms of 2-dimensional right-angled {A}rtin groups.
\newblock {\em Geom. Topol.}, 11:2227--2264, 2007.

\bibitem[CSV17]{CSV17}
Ruth Charney, Nathaniel Stambaugh, and Karen Vogtmann.
\newblock Outer space for untwisted automorphisms of right-angled {A}rtin
  groups.
\newblock {\em Geom. Topol.}, 21(2):1131--1178, 2017.

\bibitem[CV86]{CV86}
Marc Culler and Karen Vogtmann.
\newblock Moduli of graphs and automorphisms of free groups.
\newblock {\em Invent. Math.}, 84(1):91--119, 1986.

\bibitem[CV09]{ChVo09}
Ruth Charney and Karen Vogtmann.
\newblock Finiteness properties of automorphism groups of right-angled {A}rtin
  groups.
\newblock {\em Bull. Lond. Math. Soc.}, 41(1):94--102, 2009.

\bibitem[Day09]{Day09}
Matthew~B. Day.
\newblock Peak reduction and finite presentations for automorphism groups of
  right-angled {A}rtin groups.
\newblock {\em Geom. Topol.}, 13(2):817--855, 2009.

\bibitem[DW17]{DW18}
Matthew~B. Day and Richard~D. Wade.
\newblock Relative automorphism groups of right-angled artin groups.
\newblock Preprint: https://arxiv.org/abs/1712.01583, 2017.

\bibitem[KK17]{KK17}
Sang-hyun Kim and Thomas Koberda.
\newblock R{AAG}s in diffeos.
\newblock In {\em Hyperbolic geometry and geometric group theory}, volume~73 of
  {\em Adv. Stud. Pure Math.}, pages 215--224. Math. Soc. Japan, Tokyo, 2017.

\bibitem[Lau95]{Lau95}
Michael~R. Laurence.
\newblock A generating set for the automorphism group of a graph group.
\newblock {\em J. London Math. Soc. (2)}, 52(2):318--334, 1995.

\bibitem[Lev09]{LE09}
Gilbert Levitt.
\newblock Counting growth types of automorphisms of free groups.
\newblock {\em Geom. Funct. Anal.}, 19(4):1119--1146, 2009.

\bibitem[LS77]{LS77}
Roger~C. Lyndon and Paul~E. Schupp.
\newblock {\em Combinatorial group theory}.
\newblock Springer-Verlag, Berlin-New York, 1977.
\newblock Ergebnisse der Mathematik und ihrer Grenzgebiete, Band 89.

\bibitem[Ser89]{Ser89}
Herman Servatius.
\newblock Automorphisms of graph groups.
\newblock {\em J. Algebra}, 126(1):34--60, 1989.

\end{thebibliography}

\end{document}